
\documentclass[11pt]{amsart}

\usepackage{epsfig}
\usepackage{xy}
\usepackage{array}

\usepackage{stmaryrd}

\usepackage{graphicx,color}
\usepackage{xcolor}
\usepackage{tikz}
\usetikzlibrary{arrows,calc}
\usepackage{etex}

\usepackage{mathdots}

\usepackage{amsmath,amssymb}
\usepackage{float}
\usepackage{graphics}

\usepackage{pdflscape} 

\xyoption{all}

\newtheorem{lemma}{Lemma}[section]
\newtheorem{theorem}[lemma]{Theorem}
\newtheorem{corollary}[lemma]{Corollary}

\newtheorem{proposition}[lemma]{Proposition}

\newtheorem{remark}[lemma]{Remark}
\newtheorem{definition}[lemma]{Definition}

\newcommand{\id}{{\operatorname{id}}}
\newcommand{\Hom}{\operatorname{Hom}}
\newcommand{\Ext}{\operatorname{Ext}}
\newcommand{\Sub}{\operatorname{Sub}}
\newcommand{\End}{\operatorname{End}}

\newcommand{\add}{\operatorname{add}}
\newcommand{\irr}{\operatorname{Irr}}

\newcommand{\module}{\operatorname{mod}}
\newcommand{\C}{\mathcal C}

\newcommand{\ac}{\mathcal{A}}
\newcommand{\cat}{\mathcal{C}}
\newcommand{\dc}{\mathcal{D}}
\newcommand{\uc}{\mathcal{U}}
\newcommand{\qc}{\mathcal{Q}}
\newcommand{\rc}{\mathcal{R}}

\newcommand{\qcc}{\qc_{\mathcal{R}_{\cat}}}
\newcommand{\qcd}{\qc_{\mathcal{R}_{\dc}}}

\newcommand{\db}[2]{D^\text{b}\!\left(\field \mathrm{#1}_{#2}\right)}
\newcommand{\shift}{\Sigma}
\newcommand{\susm}{\shift^{-1}}

\newcommand{\fl}{\rightarrow}
\newcommand{\gfl}{\longrightarrow}

\newcommand{\al}{\alpha}

\newcommand{\ld}{\lambda}
\newcommand{\vph}{\varphi}

\newcommand{\nb}{\mathbb{N}}
\newcommand{\zb}{\mathbb{Z}}

\newcommand{\dr}{\ar@{->}[r]}
\newcommand{\bdr}{\ar@{->}[dr]}
\newcommand{\hdr}{\ar@{->}[ur]}

\newcommand{\tbar}{\overline{T}}

\newcommand{\Endt}{\End_{\cat}(T)^\text{op}}
\newcommand{\modt}{\module\Endt}
\newcommand{\field}{\mathbb{K}}


\textwidth 145mm
\oddsidemargin 0.4in
\evensidemargin 0.4in

\begin{document}

\title[Algebras
  from maximal rigid objects]{Algebras of finite representation type arising from maximal rigid objects}

\author[Buan]{Aslak Bakke Buan}
\address{
Department of Mathematical Sciences,
Norwegian University of Science and Technology,
7491 Trondheim,
NORWAY 
}
\email{aslakb@math.ntnu.no}

\author[Palu]{Yann Palu}
\address{
LAMFA, Facult\'e des sciences, 
33 rue Saint-Leu, 
80039 Amiens Cedex 1,
FRANCE
}
\email{yann.palu@u-picardie.fr}

\author[Reiten]{Idun Reiten}
\address{
Department of Mathematical Sciences,
Norwegian University of Science and Technology,
7491 Trondheim,
NORWAY 
}
\email{idunr@math.ntnu.no}

\keywords{}

\begin{abstract}
We give a complete classification of all algebras appearing as 
endomorphism algebras of maximal rigid objects in standard $2$-Calabi-Yau 
categories of finite type. Such categories are equivalent to 
certain orbit categories of derived categories of Dynkin algebras.
It turns out that with one exception, all
the algebras that occur are $2$-Calabi-Yau-tilted, and therefore appear in an
earlier classification by Bertani-{\O}kland and Oppermann. We explain
this phenomenon by investigating the subcategories generated by rigid objects
in standard $2$-Calabi-Yau 
categories of finite type. 
\end{abstract}

\thanks{
This work was supported by two grants from 
the Norwegian Research Council (FRINAT grant numbers 196600 and  231000.) Support by the
Institut Mittag-Leffler (Djursholm, Sweden) is gratefully acknowledged.
Y.P. wishes to thank the Department of Mathematical Sciences, NTNU, Trondheim,
for their kind hospitality.
}

\maketitle

\section*{Introduction}

Motivated by trying to categorify the essential ingredients in the definition of cluster algebras by Fomin and Zelevinsky,
the authors of \cite{bmrrt} introduced the cluster category $\C_Q$ associated with a finite acyclic quiver $Q$.
The notion was later generalized by Amiot \cite{a}, dealing with quivers which are not necessarily acyclic.
Let $\field$ be an algebraically closed field of characteristic
zero.
Cluster categories are special cases of Hom-finite, triangulated 2-Calabi-Yau $\field$-categories (2-CY categories). In such categories, 
the cluster tilting objects, or more generally, maximal rigid objects,
play a special role for the categorification of cluster algebras. For cluster categories in the acyclic case these two classes coincide,
but in general maximal rigid objects in 2-CY categories are not necessarily cluster tilting.

The cluster-tilted algebras are the finite dimensional algebras
obtained as endomorphism algebras of cluster tilting objects
in cluster categories. These, and the more general class of
2-Calabi-Yau-tilted algebras, are of independent interest, and have
been studied by many authors, see \cite{k2, rei,rei2}.
As a natural generalization, one also considers the endomorphism
algebras of maximal rigid objects in 2-CY categories, here called {\em 2-endorigid algebras}.

When $\Gamma = \End_{\C}(T)$ is a 2-Calabi-Yau-tilted algebra, it is not known if the category $\C$ is determined by $\Gamma$, but this is known
to be true in the case of acyclic cluster categories \cite{kr2}. However, if we consider  2-endorigid algebras, then one frequently obtains the same algebras starting with different
 2-CY categories. In this paper we investigate this phenomenon. We restrict to the case where the 2-CY categories in question only have a finite number of isomorphism 
classes of indecomposable objects. Also in this case, it is known that the 2-endorigid algebras are of finite 
representation type \cite{iy}.
In \cite{a} and \cite{xz}, the structure of triangulated categories
with finitely many indecomposables was studied. Such categories have Serre functors, and
hence there is an associated AR-quiver.
Here  orbit categories of the form $D^b(\module \field Q)/ \vph$ play a special role, where $Q$ is a Dynkin quiver, and $D^b(\module \field Q)$ is the bounded derived category
of the path algebra $\field Q$. These are exactly
the standard categories, i.e. those which can be
identified with the mesh category of their AR-quiver, which are
2-CY and have only a finite number of indecomposable objects.
In \cite{bikr}, such orbit categories with the 2-CY property were classified. And 
as an application of that classification, the 2-CY-tilted algebras of finite representation type, coming from orbit
categories, were classified in \cite{bo} (one case was missed, as was
noticed in \cite{l}, see Section \ref{subsection: list} for details.)
These classifications are crucial for our investigations. Our main
result is a complete classification of the 2-endorigid algebras associated to standard
2-CY categories of finite type. In fact, we show that all such algebras, with one single exception, already appear in 
the classification of \cite{bo}. In order to prove this we show that
the following holds in almost all cases: 
If we fix a 2-CY (orbit) category $\C$ of finite type, then
there is an associated 2-CY category $\C'$ with cluster tilting objects, such that the full additive subcategories generated by the rigid objects 
in $\C$ and $\C'$ are equivalent.

It is known that in the case of standard 2-CY categories, the 2-CY-tilted algebras of finite representation type
are Jacobian \cite{bo} (when the algebraically closed field $\field$ is of characteristic zero):
There is a potential (i.e. a sum of cycles), such that the algebra is the Jacobian
of its Gabriel quiver with respect to this potential. Moreover, all Jacobians are 2-CY-tilted, by the work of Amiot \cite{a}.
However, as indicated, we point out that there is a 2-endorigid algebra which is not 2-CY-tilted, and therefore also not Jacobian.

In section 1, we give some background material on maximal rigid and cluster tilting objects. In section 2 we give our version of the classification of
2-CY orbit categories, and in particular we describe the rigid objects
in these categories.  Then, in section 3, we define functors identifying the subcategories of rigids
in the relevant cases. In section 4, we give the example of a 2-endorigid algebra of finite type which is not 2-CY-tilted.

\subsection*{Notation}
Unless stated otherwise, $\field$ will be an algebraically closed field of
characteristic zero.
We write $\shift$ for the shift functor in any orbit category, and $[1]$ for the shift in any derived category.
We will use the following notation:
\[\ac_{n,t} = \db{A}{(2t+1)(n+1)-3}/\tau^{t(n+1)-1}[1]\]
\[\dc_{n,t} = \db{D}{2t(n+1)}/\tau^{n+1}\vph^n,\]
where $\vph$ is induced by an automorphism of order 2 of ${\rm D}_{2t(n+1)}$.
The orbit categories that we consider are triangulated, by a theorem of Keller, see~\cite{k}.

\section{Background}

In this section, we give some background material on cluster tilting
and maximal rigid objects in $\Hom$-finite triangulated 2-Calabi-Yau
categories over an algebraically closed field $\field$.

Let $d$ be a non-negative integer. A $\Hom$-finite, triangulated $\field$-category $\C$ is called {\em
  d-Calabi-Yau} or d-CY for short, if we have a natural isomorphism 
$$D\Hom(X,Y) \simeq \Hom(Y,X[d])$$ 
for objects $X,Y$ in $\C$, where $D = \Hom_{\field}(\ ,\field)$ is the ordinary
$\field$-duality. 

A main example here is the cluster category $\C_Q$ associated with a
finite (connected) acyclic quiver $Q$ \cite{bmrrt}. Here $\C_Q$ is the
orbit category $D^b(\module \field Q)/ \tau^{-1}[1]$, where $\tau$
is the AR-translation on the bounded derived category $D^b(\module
\field Q)$. 
The cluster categories have been shown to be triangulated \cite{k}. Another
main example is the stable category $\underline{\module \Lambda}$,
where $\Lambda$ is a preprojective algebra of Dynkin type, investigated
in \cite{gls}.

An object $M$ in a triangulated category is called {\em rigid} if
$\Ext^1(M,M) = 0$, and {\em maximal rigid} if it is maximal with
respect to this property.
Let $\add M$ denote the additive closure of $M$.
If also $\Ext^1(M,X) =0$ implies $X\in\add M$,
then $M$ is said to be {\em cluster tilting}.
For the cluster categories $\C_Q$ and the stable module categories  
 $\underline{\module \Lambda}$, the maximal rigid objects are also 
cluster tilting \cite{bmrrt, gls}, but this is not the case in general. 

An object  $\overline{T}$ is called an {\em almost
complete cluster tilting} object in $\C_Q$, if there is an
indecomposable object $X$, not in $\add \tbar$, such that $\overline{T}
\amalg X$ is a cluster tilting object. 
It was shown in \cite{bmrrt} that if $\overline{T}$ is an almost
complete cluster tilting object in $\C_Q$, then there is a unique
indecomposable object $Y \not \simeq X$, such that $T^{\ast} =
\overline{T} \amalg Y$ is a cluster tilting object.

There is an interesting property for cluster tilting objects which does 
not hold for maximal rigid objects. For $T$ a cluster tilting object
in a $2$-CY category $\C$, there is an equivalence of categories 
$\C/\add T \to \module \End(T)$, by \cite{bmr,kr}.

For a connected 2-Calabi-Yau category, then either all maximal rigid
objects are cluster tilting, or none of them are \cite{zz}.
And if for a maximal rigid object $M$ there are no loops or 2-cycles
in the quiver of $\End(M)$, then $M$ is cluster tilting \cite{birs, xo}.

The main sources of examples for having maximal rigid objects which
are not cluster tilting are 1-dimensional hypersurface singularities
\cite{bikr} and cluster tubes \cite{bkl, bmv, v, y}.

The 2-Calabi-Yau-tilted algebras $\Gamma$ satisfy some nice
homological properties: They are Gorenstein of dimension $\leq 1$, and
$\Sub \Gamma$ is a Frobenius category whose stable category
$\underline{\Sub} \Gamma$ is
3-Calabi-Yau \cite{kr}. Here $\Sub \Gamma$ denotes the full additive
subcategory of $\module \Gamma$ generated by the submodules of objects
in $\add \Gamma$, and $\underline{\Sub} \Gamma$ denotes the corresponding stable
category, that is: the category with the same objects, but with $\Hom$-spaces given
as the $\Hom$-spaces in $\module \Gamma$ modulo maps factoring through projective objects.
By \cite{zz}, also the 2-endorigid algebras are Gorenstein of dimension $\leq 1$.

\section{Rigid objects in triangulated orbit categories of finite type}\label{section: rigid}

\subsection{The classification}\label{subsection: list}

In \cite{a}, Amiot classified all standard triangulated categories with finitely many
indecomposable objects. By using geometric descriptions in type $\rm{A}$ \cite{ccs}
and in type $\rm{D}$ \cite{s}, and direct computations in type $\rm{E}$, Burban--Iyama--Keller--Reiten
extracted from Amiot's list all 2-Calabi--Yau triangulated categories with cluster tilting objects,
and with non-zero maximal rigid objects (see the appendix of \cite{bikr}).
In this section, we give a restatement of the results in the appendix of \cite{bikr}.
We note two changes from their lists:
\begin{enumerate}
 \item[(L1)] The orbit category $\db{E}{8}/\tau^4$ has cluster tilting objects
(this case was first noticed by Ladkani in \cite{l});
 \item[(L2)] The orbit category $\db{D}{4}/\tau^2\vph$, where $\vph$ is induced by an automorphism of
 $D_4$ of order 2, has non-zero maximal rigid objects which are not cluster tilting.
\end{enumerate}

\begin{proposition}[Amiot ; Burban--Iyama--Keller--Reiten]\label{proposition: list CTO}
The standard, 2-Calabi--Yau, triangulated categories with finitely many indecomposable objects
and with cluster tilting objects are exactly the cluster categories of Dynkin types $A$, $D$ or $E$ and
the orbit categories:
\begin{itemize}
 \item[-] \emph{(Type $\rm{A}$)} $\db{A}{3n}/\tau^n[1]$, where $n\geq 1$;
 \item[-] \emph{(Type $\rm{D}$)} $\db{D}{kn}/(\tau\vph)^n$, where $n\geq 1$, $k>1$, $kn\geq 4$ and $\vph$ is induced by an automorphism
 of ${\rm D}_{kn}$ of order 2;
 \item[-] \emph{(Type $\rm{E}$)} $\db{E}{8}/\tau^4$ and $\db{E}{8}/\tau^8$.
\end{itemize}
\end{proposition}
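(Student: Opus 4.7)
The plan is to assemble the statement from three inputs that together cover all cases: Amiot's classification of standard $2$-Calabi-Yau triangulated categories with finitely many indecomposables, the case-by-case analysis of the appendix of \cite{bikr} that singles out which of these carry a cluster tilting object, and the exceptional type $\rm{E}$ case pointed out by Ladkani. Because the proposition is presented as a restatement, the proof should be organised as a verification rather than a new classification.

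First I would invoke Amiot's theorem \cite{a} to reduce to a finite list of candidates: the cluster categories of Dynkin types $\rm{A}$, $\rm{D}$, $\rm{E}$, together with orbit categories of the form $\db{Q}{m}/\Phi$ for $Q$ Dynkin and $\Phi$ running over an explicit restricted family of auto-equivalences (powers of $\tau$, possibly twisted by $[1]$ or by a diagram automorphism). The remaining task is then, for each such candidate, to decide whether a cluster tilting object exists.

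Next I would handle the Dynkin types one by one, essentially re-doing the analysis of \cite{bikr}. In type $\rm{A}$, the geometric model of \cite{ccs} realises the relevant orbit categories as categories of arcs in annuli, and cluster tilting objects correspond to triangulations; a direct combinatorial argument isolates exactly the family $\db{A}{3n}/\tau^n[1]$. In type $\rm{D}$, Schiffler's model \cite{s} translates the question into $\vph$-equivariant triangulations of a punctured disc, and pins down the family $\db{D}{kn}/(\tau\vph)^n$. In type $\rm{E}$, the analysis is done by direct finite computation in the mesh category of the AR-quiver of each orbit category in Amiot's list.

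The main obstacle is the type $\rm{E}$ case, and specifically the verification of (L1): that $\db{E}{8}/\tau^4$ admits a cluster tilting object even though it was omitted from the original list of \cite{bikr}. Following Ladkani \cite{l}, the plan is to exhibit an explicit candidate $T$ in the mesh category of $\db{E}{8}/\tau^4$, and check that $\Ext^1(T,T)=0$ and that $T$ has the correct number of non-isomorphic indecomposable summands to be cluster tilting. The remaining orbit categories in Amiot's list that do not appear in the statement are ruled out by similar but negative explicit computations in the corresponding mesh categories. Apart from this new case, the argument is a careful bookkeeping pass through the results of \cite{a} and \cite{bikr}.
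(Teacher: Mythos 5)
Your overall strategy (Amiot's list of candidates, then a case-by-case check of which ones admit cluster tilting objects, with Ladkani's $\db{E}{8}/\tau^4$ as the new entry) is sound, but it is both heavier and, in one respect, weaker than what the paper actually does. The paper does not re-derive the classification from geometric models; it takes the table in the appendix of \cite{bikr} as given and spends its proof on two specific verifications that your plan does not mention. First, the type $\rm{A}$ entries of that table are written as $(\tau^{m/2}[1])^{(m+3)/3}$ (for $m$ even) and $\tau^{(m+3)/6+(m+1)/2}[1]$ (for $m$ odd), and the paper's main computation is to show, using the fractional Calabi--Yau identity $\tau^{-(m+1)}=[2]$ in $\db{A}{m}$, that both generate the same cyclic group as $\tau^n[1]$ with $m=3n$; without some such identification you cannot arrive at the clean uniform description in the statement. (Incidentally, the \cite{ccs} model is diagonals of a polygon, and the orbit categories correspond to rotation-equivariant collections of arcs of a polygon, not arcs in an annulus.)

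The more serious omission concerns the second correction to \cite{bikr}, labelled (L2) in the paper: the orbit category $\db{D}{4}/\tau^2\vph$, with $\vph$ of order $2$, appears among the $\rm{D}_4$ candidates $\tau^k\sigma$ of the table, and one must show it has non-zero maximal rigid objects which are \emph{not} cluster tilting, so that it belongs to Proposition~\ref{proposition: list max rigid} rather than to the list you are proving. The paper does this by computing Hom-hammocks in the AR-quiver and observing that $d$ and $\shift d$ are the only non-zero rigid objects and receive no non-zero maps from $b$ or $c$. Your proposal flags only Ladkani's correction (L1) and otherwise proposes to trust the case analysis of the appendix of \cite{bikr} for deciding which candidates carry cluster tilting objects; since that is precisely where this $\rm{D}_4$ case needs correcting, a bookkeeping pass that does not independently examine it would propagate the error. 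You should add an explicit check of $\db{D}{4}/\tau^2\vph$ (and state the automorphism-identification step in type $\rm{A}$) to close the argument.
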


\begin{proof}
These categories are described in a table of the appendix of
\cite{bikr}, and our description is based on that.
We explain why and how our description in case of types
  $\rm{A}$ and $\rm{D}_4$ differ from that of \cite{bikr}.

Apart from the cluster category,
the orbit categories of $\db{A}{m}$ appearing in the table of \cite{bikr} are given by the automorphisms:
\begin{itemize}
\item[] $(\tau^{\frac{m}{2}}[1])^{\frac{m+3}{3}}$, if 3 divides $m$
  and $m$ is even;
\item[]
\item[] $\tau^{\frac{m+3}{6}+\frac{m+1}{2}}[1]$, if 3 divides $m$ and
  $m$ is odd.
\end{itemize}

We simplify this description by using the fact that in the triangulated category
$\db{A}{m}$, we have \begin{equation} \label{eq:cy} \tau^{-(m+1)} =
  [2] \end{equation}
Note that this is sometimes referred to as a {\em fractional
Calabi--Yau property.}
 
Let $m=3n$. Assume first
that $n$ is even. We then have:
\begin{multline*}(\tau^{\frac{m}{2}}[1])^{\frac{m+3}{3}} =
(\tau^{3\frac{n}{2}}[1])^{n+1}
 = (\tau^{3n+1})^{\frac{n}{2}}\tau^n[n+1] =
(\tau^{3n+1})^{\frac{n}{2}}\tau^n[2]^\frac{n}{2} [1]  \\ =  (\tau^{3n+1}
[2])^{\frac{n}{2}}  ( \tau^n[1] ) = \tau^n[1] \end{multline*}
where the last equality follows from (\ref{eq:cy}).

Assume now that $n$ is odd. Then, we have
$$\tau^{\frac{m+3}{6}+\frac{m+1}{2}}[1] = \tau^{2n+1}[1] = (\tau^n[1])^{-1}$$
where (\ref{eq:cy}) is used for the last equation.

In both cases, the orbit category is $\db{A}{3n}/\tau^n[1]$.

The orbit categories of $\db{D}{4}$ appearing in the table of \cite{bikr}
are given by the automorphisms:
$\tau^k\sigma$, where $k$ divides 4, where $\sigma$ is induced by an automorphism
of ${\rm D}_4$ satisfying $\sigma^\frac{4}{k} = 1$ and where $(k,\sigma)\neq (1,1)$.
We thus have:
\begin{itemize}
\item[] if $k=1$, then $\sigma$ is of order 2; 
\item[] if $k=2$, then $\sigma$ is either the identity or of order 2;
\item[] if $k=4$, then $\sigma$ is the identity and the orbit category is the cluster category of type $\rm{D}_4$.
\end{itemize}
We claim that if $k=2$ and $\sigma$ is of order 2, then the corresponding orbit category
has non-zero maximal rigid objects, but does not have cluster tilting objects. Let thus $\sigma$
be of order 2. 
By computing the Hom-hammocks in the Auslander--Reiten quiver:
\[\xymatrix@-1pc{
& d \bdr & & \shift d \bdr & & d \\
a \dr\hdr\bdr & c \dr & \shift a \dr\hdr\bdr & \shift b \dr & a \dr\hdr\bdr & b \\
& b \hdr & & \shift c \hdr & & c,
}\]
one finds that $d$ and $\shift d$ are the only non-zero rigid objects and that
there are no non-zero morphisms from $d$ to $b$ or $c$. This shows that
$d$, and therefore also $\shift d$, are maximal rigid objects which are not cluster tilting.
This explains (L2). 
\end{proof}

\begin{proposition}[Amiot ; Burban--Iyama--Keller--Reiten]\label{proposition: list max rigid}
The standard, 2-Calabi--Yau, triangulated categories with finitely many indecomposable objects
and with non-zero maximal rigid objects which are not cluster tilting
are exactly the orbit categories:
\begin{itemize}
 \item[-] \emph{(Type A)} $\db{A}{(2t+1)(n+1)-3}/\tau^{t(n+1)-1}[1]$, where $n\geq 1$ and $t>1$;
 \item[-] \emph{(Type D)} $\db{D}{2t(n+1)}/\tau^{n+1}\vph^n$, where $n,t\geq 1$, and where $\vph$ is induced by an automorphism of
 ${\rm D}_{2t(n+1)}$ of order 2 ;
 \item[-] \emph{(Type E)} $\db{E}{7}/\tau^2$ and $\db{E}{7}/\tau^5$.
\end{itemize}
\end{proposition}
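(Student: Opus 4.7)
The plan is to adapt the strategy of the proof of Proposition~\ref{proposition: list CTO} to the present setting. The raw list of standard 2-Calabi--Yau orbit categories of finite type with non-zero maximal rigid objects is already implicit in the table in the appendix of \cite{bikr}; what requires work is to rewrite the defining automorphisms in the uniform normal forms stated in the proposition, and to subtract off the cases in which every maximal rigid object happens to be cluster tilting, since those are already handled by Proposition~\ref{proposition: list CTO}.

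For type A, I would begin with the list of automorphisms $\tau^a[1]^b$ of $\db{A}{m}$ in \cite{bikr} whose orbit categories admit non-zero maximal rigid objects. The key tool is again the fractional Calabi--Yau identity $\tau^{-(m+1)}=[2]$ valid in $\db{A}{m}$, which was exploited in the proof of Proposition~\ref{proposition: list CTO}. Substituting $m=(2t+1)(n+1)-3$ and performing manipulations of the same flavor as before, each such automorphism becomes, up to taking powers or inverses (which generate the same cyclic subgroup, hence define the same orbit category), of the form $\tau^{t(n+1)-1}[1]$ for uniquely determined integers $n\geq 1$ and $t\geq 1$. The requirement $t>1$ is precisely what separates this list from the type A entries already appearing in Proposition~\ref{proposition: list CTO}.

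For type D, I would proceed analogously. The relevant entries in \cite{bikr}'s table are orbit categories of $\db{D}{r}$ by automorphisms of the form $\tau^a\vph^b$, where $\vph$ is the order-two diagram automorphism. Using the relations between $\tau$, $\vph$ and the shift in $\db{D}{r}$, together with $\vph^2=\id$, and setting $r=2t(n+1)$, one collects exponents to obtain the uniform form $\tau^{n+1}\vph^n$ with $n,t\geq 1$. The extremal parameter regimes either fall under the cluster tilting list of Proposition~\ref{proposition: list CTO} or reduce to the $\rm{D}_4$ exception noted in (L2). For type E, no simplification is required: inspecting \cite{bikr}'s table and removing the $\rm{E}_8$ orbit categories (which appear in Proposition~\ref{proposition: list CTO}), one is left exactly with $\db{E}{7}/\tau^2$ and $\db{E}{7}/\tau^5$.

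The main obstacle is the combinatorial bookkeeping in the type A and type D simplifications: one must verify that the proposed normal forms generate the same cyclic subgroups of autoequivalences as each entry in \cite{bikr}, and that the integer parameters $n$ and $t$ can be uniquely recovered from the data. Once the normal forms are secured, the assertion that these orbit categories carry non-zero maximal rigid objects which are not cluster tilting is inherited directly from \cite{bikr}, and no additional representation-theoretic computation is required beyond the hammock argument already used in the proof of Proposition~\ref{proposition: list CTO} for the $\rm{D}_4$ case.
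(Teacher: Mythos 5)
Your proposal follows essentially the same route as the paper's proof: start from the table in the appendix of \cite{bikr}, use the fractional Calabi--Yau identity $\tau^{-(m+1)}=[2]$ to bring the two families of type A automorphisms into the normal form $\tau^{t(n+1)-1}[1]$ (with $t>1$ excluding the cluster tilting cases of Proposition~\ref{proposition: list CTO}), and take the type D and E entries directly from the table. The only cosmetic difference is that you anticipate some exponent bookkeeping in type D, whereas the paper treats only type A as requiring comment.
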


\begin{proof}
Type $\rm{A}$ deserves a few comments. The tables in the appendix of \cite{bikr} list
 all orbit categories of $\db{A}{m}$ with non-zero maximal rigid objects which are not cluster tilting. They are given
 by the following automorphisms:
\begin{itemize}
\item[I.] $(\tau^\frac{m}{2}[1])^k$, where $m$ is even; $k$ divides $m+3$; $k\neq 1$; $k\neq m+3$ and if 3 divides $m$, then $k\neq\frac{m+3}{3}$;
\item[] 
\item[II.] $\tau^{k + \frac{m+1}{2}}[1]$, where $m$ is odd;  $k$ divides $\frac{m+3}{2}$; $\frac{m+3}{2k}$ is odd;
 $k\neq \frac{m+3}{2}$ and if 3 divides $m$, then $k\neq\frac{m+3}{6}$
\end{itemize}
As in the proof of Proposition \ref{proposition: list CTO}, we use the property given by equation (\ref{eq:cy}),
in order to give a uniform description of all the cases above.
Note first that if $k=\frac{m+3}{3}$ or if $k=\frac{m+3}{6}$, then 3 divides $m$. Therefore, the condition
``if 3 divides $m$'' above is redundant.

Assume first we are in case I above, so $m$ is even and we can write $m+3 = uk$, where $u$ and $k$ are greater than 1 and $u\neq 3$.
We then have: 
\begin{multline*} (\tau^\frac{m}{2}[1])^k = (\tau^\frac{uk-3}{2}[1])^k
  = \tau^{k\frac{uk-3}{2}}[1][k-1] = \tau^{k\frac{uk-3}{2}}[1][2]^\frac{k-1}{2}\\
= \tau^{k\frac{uk-3}{2}}[1](\tau^{-uk+2})^\frac{k-1}{2}
= \tau^{\frac{u-1}{2}k-1}[1]\end{multline*}

Replacing $u$ by $2t+1$ and $k$ by $n+1$ gives $$m= uk-3 =
(2t+1)(n+1)-3 \text{ and } \frac{u-1}{2}k-1 = t(n+1) -1$$ Hence, we obtain the orbit categories
$\db{A}{(2t+1)(n+1)-3}/\tau^{t(n+1)-1}[1]$ (where $t>1$ and $n\geq 1$).

Assume now we are in case II, so that $m$ is odd and we can write $m+3 = 2uk$, where $u$ is odd and greater than 3.
We then have: 
$$\tau^{k + \frac{m+1}{2}}[1] = \tau^{k+uk-1}[1]
= \tau^{\frac{u+1}{2}2k-1}[1] = (\tau^{\frac{u-1}{2}2k-1}[1])^{-1}$$
where the last equation follows from equation (\ref{eq:cy}).

Replacing $u$ by $2t+1$, and $2k$ by $n+1$ gives $$m = 2uk-3=
(2t+1)(n+1)-3 \text{ and } \frac{u-1}{2}2k-1= t(n+1) -1$$ 
and also in this case we obtain the orbit categories
$\db{A}{(2t+1)(n+1)-3}/\tau^{t(n+1)-1}[1]$ (where $t>1$, $n\geq 1$).
\end{proof}

\begin{remark}
For a given value of $n$, the orbit categories \[\db{A}{(2t+1)(n+1)-3}/\tau^{t(n+1)-1}[1]\]
share some similarities, and are compared in section~\ref{section:
  comparisons}.
Note that when $t=1$ , we
  have $$\db{A}{(2t+1)(n+1)-3}/\tau^{t(n+1)-1}[1] =
  \db{A}{3n}/\tau^{n}[1].$$ Hence, by Proposition \ref{proposition: list CTO} 
this orbit category has cluster tilting
objects. On the other hand, if $t>1$ it has
non-zero maximal rigid objects which are not cluster tilting. This family can be expanded
by including the cluster tubes, thought of as a limit obtained when $t$ goes to infinity.
This point of view will be corroborated in sections \ref{section: comparisons} and \ref{section: endoalg},
where the endomorphism algebras of the maximal rigid objects in these categories are shown to be independent
of the specific value of $t$. 
\end{remark}

\subsection{The rigid objects}\label{subsection: rigid}

We will now describe indecomposable rigid objects in
the orbit categories listed in subsection \ref{subsection: list},
and then consider the additive subcategories generated by the set of rigid
objects.

\subsubsection{Type $\rm{A}$}\label{subsubsection: max rigid A}

In order to compute the rigid objects in the orbit categories
$\ac_{n,t} = \db{A}{(2t+1)(n+1)-3}/\tau^{t(n+1)-1}[1]$, we use the geometric description
\cite{ccs} of the cluster category of type $\rm{A}$.
The following lemma was implicitly used in the appendix of \cite{bikr}.

\begin{lemma}\label{lemma: rigid A}
 \begin{enumerate}
 \item There is a bijection between isomorphism classes of basic objects in $\ac_{n,t}$ and
collections of arcs of the $(2t+1)(n+1)$-gon which are stable under rotation by $\frac{2\pi}{2t+1}$.
Such a bijection is given in figure~\ref{figure: arcs type A17} for $t=2, n=3$ and is sketched in
figure~\ref{figure: Ant} for the general case.
 \item Under the bijection above, rigid objects correspond to non-crossing collections
of arcs. In particular:
\begin{enumerate}
 \item The isomorphism classes of  \sloppy indecomposable
  rigid objects in $\ac_{n,t}$ are parametrised by the arcs
$[i\;(i+2)],\ldots,[i\;(i+n+1)]$ for $i=1,\ldots,n+1$.
 \item  The maximal non-crossing collections correspond
to (isoclasses of) basic maximal rigid objects and such an object is cluster tilting if and only if the collection of arcs
is a triangulation (if and only if $t=1$).
\end{enumerate}
\end{enumerate}
\end{lemma}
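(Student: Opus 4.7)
The plan is to realise $\ac_{n,t}$ as a further orbit quotient of the cluster category $\C_{A_N}$ of type $\rm{A}_N$ with $N=(2t+1)(n+1)-3$, and to transport the Caldero--Chapoton--Schiffler polygon model \cite{ccs} across this identification.

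First, using the fractional Calabi--Yau identity $\tau^{-(N+1)}=[2]$ in $\db{A}{N}$---of the same flavour as the relation exploited in the proof of Proposition~\ref{proposition: list max rigid}---a short computation gives
\[(\tau^{t(n+1)-1}[1])^{2t+1}=\tau^{-1}[1]\]
in $\db{A}{N}$. Consequently $\langle\tau^{-1}[1]\rangle$ is an index-$(2t+1)$ subgroup of $\langle\tau^{t(n+1)-1}[1]\rangle$, so $\ac_{n,t}\simeq \C_{A_N}/F$, where $F$ is the induced automorphism of $\C_{A_N}$. Since $[1]=\tau$ in $\C_{A_N}$, we have $F=\tau^{t(n+1)}$. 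Under CCS, the cluster category $\C_{A_N}$ identifies with the additive category of arcs of the $(N+3)=(2t+1)(n+1)$-gon; $\tau$ corresponds to rotation by one vertex, and $\Ext^1(X,Y)=0$ is equivalent to the corresponding arcs being non-crossing. Hence $F$ acts as rotation by $t(n+1)$ vertices, and since $\gcd(t,2t+1)=1$ this rotation generates the same cyclic subgroup as rotation by $(n+1)$ vertices, i.e.\ by $\tfrac{2\pi}{2t+1}$. Passing to $F$-orbits therefore identifies basic objects of $\ac_{n,t}$ with $(2t+1)$-rotation-invariant arc collections, proving (1).

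For (2), the $\Ext^1$-space between two orbits in $\ac_{n,t}$ decomposes as a direct sum of $\Ext^1$-spaces in $\C_{A_N}$ indexed by the orbit; a $(2t+1)$-invariant collection is therefore rigid iff no two arcs in it cross \emph{and} no arc crosses any of its own rotates. A direct polygon check shows that $[j\,(j+k)]$ crosses $[(j+n+1)\,(j+k+n+1)]$ exactly when $k>n+1$; combined with $k\geq 2$ this forces $2\leq k\leq n+1$, and choosing orbit representatives $i\in\{1,\ldots,n+1\}$ yields the parametrisation in (2)(a). For (2)(b), a maximal rigid object corresponds to a maximal $(2t+1)$-invariant non-crossing collection; such a collection is a triangulation iff it contains $(2t+1)(n+1)-3$ arcs, and since every orbit has size $2t+1$ (there being no fixed diagonals), this requires $2t+1\mid 3$, i.e.\ $t=1$. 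The equivalence ``cluster tilting iff triangulation'' is the standard type-$\rm{A}$ dictionary within the cluster category.

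The main obstacle is the algebraic identification in the first step: verifying $(\tau^{t(n+1)-1}[1])^{2t+1}=\tau^{-1}[1]$ through the fractional Calabi--Yau relation so that $\ac_{n,t}$ genuinely arises as a rotational orbit of $\C_{A_N}$. Once this is in place, (2) reduces to familiar CCS combinatorics; the only subtle point is to track crossings of an arc with its own rotates, which is what forces $k\leq n+1$ in (2)(a) and $t=1$ in (2)(b).
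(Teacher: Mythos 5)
Your proposal is correct and follows essentially the same route as the paper: both realise $\ac_{n,t}$ as a $(2t+1)$-fold quotient of the cluster category of type $\mathrm{A}_{(2t+1)(n+1)-3}$, identify the induced automorphism with rotation by $n+1$ vertices (using $\gcd(t,2t+1)=1$), and read off rigidity from non-crossing of arcs and their rotates via the Caldero--Chapoton--Schiffler model. The only differences are cosmetic: the paper obtains the covering functor from the universal property of orbit categories and the $2$-CY property rather than from your explicit computation $(\tau^{t(n+1)-1}[1])^{2t+1}=\tau^{-1}[1]$, and your divisibility argument ($2t+1\mid 3$) for the ``triangulation iff $t=1$'' claim is a clean alternative to the paper's argument via an uncrossed arc.
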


\begin{center}
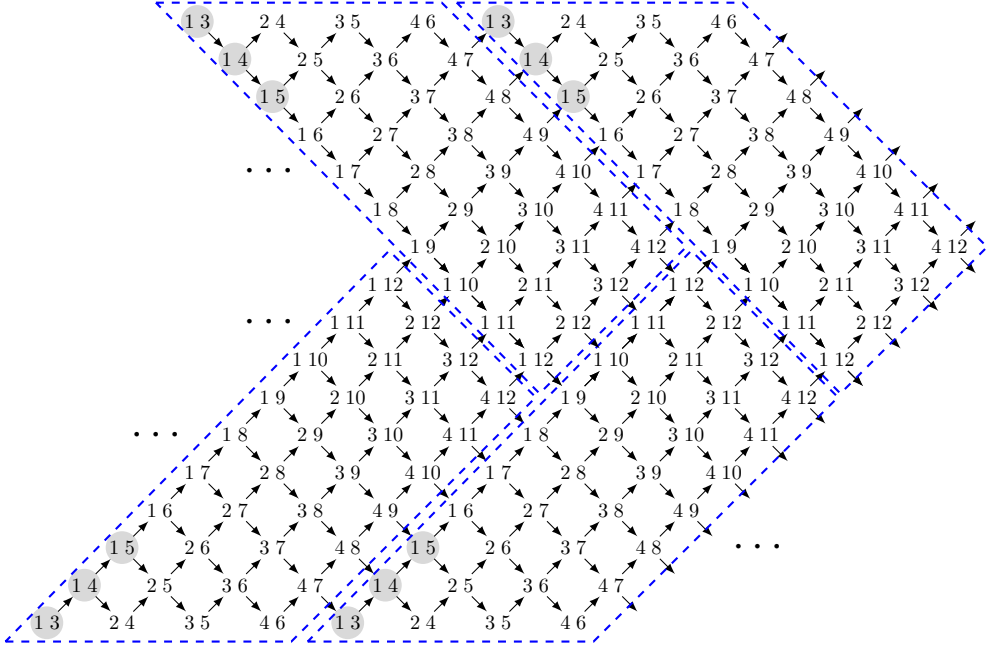
\begin{figure}
\begin{tikzpicture}[scale=0.5,
fl/.style={->,shorten <=6pt, shorten >=6pt,>=latex}]
\draw (0,0) node[circle, fill=black!15, scale=1.2] {} ;
\draw (1,1) node[circle, fill=black!15, scale=1.2] {} ;
\draw (2,2) node[circle, fill=black!15, scale=1.2] {} ;
\draw (8,0) node[circle, fill=black!15, scale=1.2] {} ;
\draw (9,1) node[circle, fill=black!15, scale=1.2] {} ;
\draw (10,2) node[circle, fill=black!15, scale=1.2] {} ;
\draw (4,16) node[circle, fill=black!15, scale=1.2] {} ;
\draw (5,15) node[circle, fill=black!15, scale=1.2] {} ;
\draw (6,14) node[circle, fill=black!15, scale=1.2] {} ;
\draw (12,16) node[circle, fill=black!15, scale=1.2] {} ;
\draw (13,15) node[circle, fill=black!15, scale=1.2] {} ;
\draw (14,14) node[circle, fill=black!15, scale=1.2] {} ;
\foreach \x in {1,2,3,4} {
 \pgfmathparse{12-\x}\let\z\pgfmathresult ;
  \foreach \y in {2,3,...,\z} {
   \newcount\u ;
   \pgfmathsetcount{\u}{\x+\y} ;
   \draw (2*\x+\y-4,\y-2) node[scale=0.7] {\x$\;$\the\u} ;
   \draw[fl] (\y-4+2*\x,\y-2) -- (\y-3+2*\x,\y-1) ;
   \draw[fl] (\y-3+2*\x,\y-1) -- (\y-2+2*\x,\y-2) ;
  } ;
} ;
\draw[thick, dashed, blue] (-1.1,-0.5) -- ++(7.6,0) -- ++(6.5,6.5) -- ++(-3.9,3.9) -- cycle ;
\begin{scope}[xshift=8cm]
 \foreach \x in {1,2,3,4} {
 \pgfmathparse{12-\x}\let\z\pgfmathresult ;
  \foreach \y in {2,3,...,\z} {
   \newcount\w ;
   \pgfmathsetcount{\w}{\x+\y} ;
   \draw (2*\x+\y-4,\y-2) node[scale=0.7] {\x$\;$\the\w} ;
   \draw[fl] (\y-4+2*\x,\y-2) -- (\y-3+2*\x,\y-1) ;
   \draw[fl] (\y-3+2*\x,\y-1) -- (\y-2+2*\x,\y-2) ;
  } ;
} ;
\end{scope}
\begin{scope}[xshift=240]
\draw[thick, dashed, blue] (-1.5,-0.5) -- ++(7.6,0) -- ++(6.5,6.5) -- ++(-3.9,3.9) -- cycle ;
\end{scope}
\begin{scope}[xshift=4cm, yshift=16cm, rotate=180, xscale=-1]
 \foreach \x in {1,2,3,4} {
 \pgfmathparse{12-\x}\let\z\pgfmathresult ;
  \foreach \y in {2,3,...,\z} {
   \newcount\r ;
   \pgfmathsetcount{\r}{\x+\y} ;
   \draw (2*\x+\y-4,\y-2) node[scale=0.7] {\x$\;$\the\r} ;
   \draw[fl] (\y-4+2*\x,\y-2) -- (\y-3+2*\x,\y-1) ;
   \draw[fl] (\y-3+2*\x,\y-1) -- (\y-2+2*\x,\y-2) ;
  } ;
} ;
\end{scope}
\begin{scope}[xshift=4.2cm, yshift=16.2cm, rotate=180, xscale=-1]
\draw[thick, dashed, blue] (-1.3,-0.3) -- ++(7.6,0) -- ++(6.5,6.5) -- ++(-3.9,3.9) -- cycle ;
\end{scope}
\begin{scope}[xshift=12cm, yshift=16cm, rotate=180, xscale=-1]
 \foreach \x in {1,2,3,4} {
 \pgfmathparse{12-\x}\let\z\pgfmathresult ;
  \foreach \y in {2,3,...,\z} {
   \newcount\r ;
   \pgfmathsetcount{\r}{\x+\y} ;
   \draw (2*\x+\y-4,\y-2) node[scale=0.7] {\x$\;$\the\r} ;
   \draw[fl] (\y-4+2*\x,\y-2) -- (\y-3+2*\x,\y-1) ;
   \draw[fl] (\y-3+2*\x,\y-1) -- (\y-2+2*\x,\y-2) ;
  } ;
 \draw (9.5-\x+2*\x,10.5-\x) node[circle, fill=white, scale=1.2] {} ;
} ;
\end{scope}
\begin{scope}[xshift=12.2cm, yshift=16.2cm, rotate=180, xscale=-1]
\draw[thick, dashed, blue] (-1.3,-0.3) -- ++(7.6,0) -- ++(6.5,6.5) -- ++(-3.9,3.9) -- cycle ;
\end{scope}
\draw (3,5) node[scale=1.5] {$\cdots$} ;
\draw (6,8) node[scale=1.5] {$\cdots$} ;
\draw (6,12) node[scale=1.5] {$\cdots$} ;
\draw (19,2) node[scale=1.5] {$\cdots$} ;
\end{tikzpicture}
\caption{A bijection between $\frac{2\pi}{5}$-periodic collections of arcs of the heptakaidecagon
and isomorphism classes of basic objects in $\ac_{3,2}$. The maximal rigid object of Corollary~\ref{corollary: max rigid A}
is highlighted in grey.}
\label{figure: arcs type A17}
\end{figure}
\end{center}

\begin{proof}
 Let $n,t\geq 1$, let $N = (2t+1)(n+1)$, let $\ac_{n,t}$ be the triangulated orbit category 
$\db{A}{N-3}/\tau^{t(n+1)-1}[1]$ and let $\cat_{\rm{A}_{N-3}}$ be the cluster category of type $\rm{A}_{N-3}$. Using
that $\ac_{n,t}$ is 2-CY, 
the universal property of orbit categories yields a functor
$\cat_{\rm{A}_{N-3}} \stackrel{F}{\gfl} \ac_{n,t}$. Note that this
covering functor commutes with shift functors
since these latter are induced by the shift in the orbit category $\db{A}{N-3}$.

In the cluster category $\cat_{\rm{A}_{N-3}}$, we have $\tau^{t(n+1)-1}[1] = \tau^{t(n+1)}$. Moreover,
in the derived category $\db{A}{N-3}$, we have $\tau^{N-2} = [-2]$. Therefore, $\tau$ is of order $N$ in
$\cat_{\rm{A}_{N-3}}$, and $\tau^{n+1}$ is of order $2t+1$. Since gcd$(t,2t+1) = 1$, then $\tau^{t(n+1)}$ is also of order
$2t+1$ and generates the same group as $\tau^{n+1}$.
The functor $F$ is thus a $(2t+1)$-covering functor, with $F(\tau^{n+1}X)$ isomorphic to $FX$ for any object $X$.
Since $F$ commutes with shifts, we have, for any two objects $X,Y$ in $\ac_{n,t}$:
$\Ext^1_{\ac_{n,t}}(X,Y) = \Hom_{\ac_{n,t}}(X,\shift Y) \simeq
\bigoplus_{FY'\simeq Y} \Hom_{\cat_{A_{N-3}}}(X,\shift Y')
= \bigoplus_{FY'\simeq Y} \Ext^1_{\cat_{A_{N-3}}}(X,Y')$.
We can thus use the description of the cluster category $\cat_{A_{N-3}}$ in terms of diagonals of the $N$-gon \cite{ccs}
in order to compute the rigid indecomposable objects in $\ac_{n,t}$: Isomorphism classes of indecomposable objects in $\ac_{n,t}$
are in bijection with collections of $2t+1$ diagonals of the $N$-gon which are stable under the automorphism sending a diagonal
$[i\;j]$ to $[(i+n+1)\;(j+n+1)]$. Moreover, such a collection corresponds to a rigid indecomposable object in $\ac_{n,t}$
if and only if none of its diagonals cross. This shows that isomorphism classes of indecomposable rigid objects
in $\ac_{n,t}$ are parametrised by the arcs $[i\;(i+2)],\ldots,[i\;(i+n+1)]$ for $i=1,\ldots,n+1$.

Consider a maximal collection $\mathfrak{A}$ of non-crossing arcs, stable under rotation by $\frac{2\pi}{2t+1}$,
that is not a triangulation. Then there exists an arc $\gamma$ which
does not cross any arc in the collection (such an arc will correspond
to a non-rigid indecomposable object). Necessarily, none of the rotations of
$\gamma$ by multiples of $\frac{2\pi}{2t+1}$ cross any arc in the collection.
This implies that the maximal rigid object corresponding to $\mathfrak{A}$ is not cluster tilting.
\end{proof}

\begin{center}
\begin{figure}
\begin{tikzpicture}[scale=0.475,
fl/.style={->,shorten <=5pt, shorten >=5pt,>=latex}]
\foreach \y in {0,1,2} {
\draw[fl] (1+\y,\y) -- (1+\y +1,\y +1) ;
\draw[fl] (2+\y,\y +1) -- (2+\y +1,\y ) ;
\draw (1+\y,\y) node[circle, fill=black!15, scale=1.2] {} ;
\newcount\u ;
\pgfmathsetcount{\u}{3+\y} ;
\draw (1+\y,\y) node[scale=0.7] {1$\;$\the\u} ;
} ;
\draw[fl] (3,0) -- (4,1) ;
\draw[fl] (4,1) -- (5,2) ;
\draw[fl] (4,1) -- (5,0) ;
\draw (6,0.5) node[scale=1.5] {$\cdots$} ;
\draw (7,2) node[scale=1.5] {$\cdots$} ;
\foreach \x in {1,2,3,4} {
\foreach \y in {0,1,2} {
\draw[fl] (5+2*\x +\y,\y) -- (5+2*\x +\y +1,\y +1) ;
\draw[fl] (5+2*\x +\y +1,\y +1) --(5+2*\x +\y +2,\y) ;
} ;
} ;
\foreach \y in {0,1,2} {
\draw[fl] (5+2*5 +\y,\y) -- (5+2*5 +\y +1,\y +1) ;
\draw (5+2*5 +\y,\y) node[circle, fill=black!15, scale=1.2] {} ;
\newcount\u ;
\pgfmathsetcount{\u}{3+\y} ;
\draw (5+2*5 +\y,\y) node[scale=0.7] {1$\;$\the\u} ;
} ;
\begin{scope}[xshift=-1cm, yshift=-1cm]
\draw (7,6) node[circle, fill=black!15, scale=1.2] {} ;
\draw (7,6) node[scale=0.7] {1$\; n\!+\!2$} ;
\draw[loosely dotted] (4.2,3.2) -- (6,5) ;
\draw[fl] (6,5) -- (7,6) ;
\draw[fl] (7,6) -- (8,5) ;
\draw[fl] (7,6) -- (8,7) ;
\draw[loosely dotted] (8,5) -- (9.8,3.2) ;
\draw[loosely dotted] (8.2,7.2) -- (15.8,14.8) ;
\draw[fl] (15,14) -- (16,15) ;
\draw[fl] (14,13) -- (15,14) ;
\draw (15,14) node[scale=0.7] {1\;$u$} ;
\draw[loosely dotted] (15.2,14.2) -- (16.8,15.8) ;

\draw[thick, dashed, blue] (0.5,0.5) -- (15,15) -- ++(7,-7) -- ++(-7.5,-7.5) --cycle ;
\draw[dashed, red] (5,6.5) -- ++(20,0) ;
\end{scope}

\begin{scope}[xshift=10cm, yshift=20cm, rotate=180, xscale=-1]
\foreach \y in {0,1,2} {
\draw[fl] (1+\y,\y) -- (1+\y +1,\y +1) ;
\draw[fl] (2+\y,\y +1) -- (2+\y +1,\y ) ;
\draw (1+\y,\y) node[circle, fill=black!15, scale=1.2] {} ;
\newcount\u ;
\pgfmathsetcount{\u}{3+\y} ;
\draw (1+\y,\y) node[scale=0.7] {1$\;$\the\u} ;
} ;
\draw[fl] (3,0) -- (4,1) ;
\draw[fl] (4,1) -- (5,2) ;
\draw[fl] (4,1) -- (5,0) ;
\draw (6,0.5) node[scale=1.5] {$\cdots$} ;
\draw (7,2) node[scale=1.5] {$\cdots$} ;
\foreach \x in {1,2,3,4} {
\foreach \y in {0,1,2} {
\draw[fl] (5+2*\x +\y,\y) -- (5+2*\x +\y +1,\y +1) ;
\draw[fl] (5+2*\x +\y +1,\y +1) --(5+2*\x +\y +2,\y) ;
} ;
} ;
\foreach \y in {0,1,2} {
\draw[fl] (5+2*5 +\y,\y) -- (5+2*5 +\y +1,\y +1) ;
\draw (5+2*5 +\y,\y) node[circle, fill=black!15, scale=1.2] {} ;
\newcount\u ;
\pgfmathsetcount{\u}{3+\y} ;
\draw (5+2*5 +\y,\y) node[scale=0.7] {1$\;$\the\u} ;
} ;

\begin{scope}[xshift=-1cm, yshift=-1cm]
\draw (7,6) node[circle, fill=black!15, scale=1.2] {} ;
\draw (7,6) node[scale=0.7] {1$\; n\!+\!2$} ;
\draw[loosely dotted] (4.2,3.2) -- (6,5) ;
\draw[fl] (6,5) -- (7,6) ;
\draw[fl] (7,6) -- (8,5) ;
\draw[fl] (7,6) -- (8,7) ;
\draw[loosely dotted] (8,5) -- (9.8,3.2) ;

\draw[loosely dotted] (16.2,3.2) -- ++(2.8,2.8) ;
\draw[dashed, red] (3,6.5) -- ++(17,0) ;
\end{scope}
\end{scope}
\draw[loosely dotted] (16.2,3.2) -- ++(11.8,11.8) ;
\draw[fl, green] (6.5,5) -- ++(9.8,9.8) ;
\draw (13,0) node[scale=.5] {$n\!+\!1\;n\!+\!3$} ;
\draw (14,1) node[scale=.5] {$n\!+\!1\;n\!+\!4$} ;
\draw (15,2) node[scale=.5] {$n\!+\!1\;n\!+\!5$} ;
\end{tikzpicture}
\caption{A fundamental domain for $\ac_{n,t}$ inside the derived category
is encircled by a dotted blue line. Below the bottom (and above the top)
dotted red line lie all rigid indecomposable objects.
The Hom-hammock of $(1\,n+2)$ is emphasized by a dotted rectangle.
The green arrow gives rise to a loop in the quiver $\qcc$.
Here $u$ equals $(t+1)(n+1)$.}
\label{figure: Ant}
\end{figure}
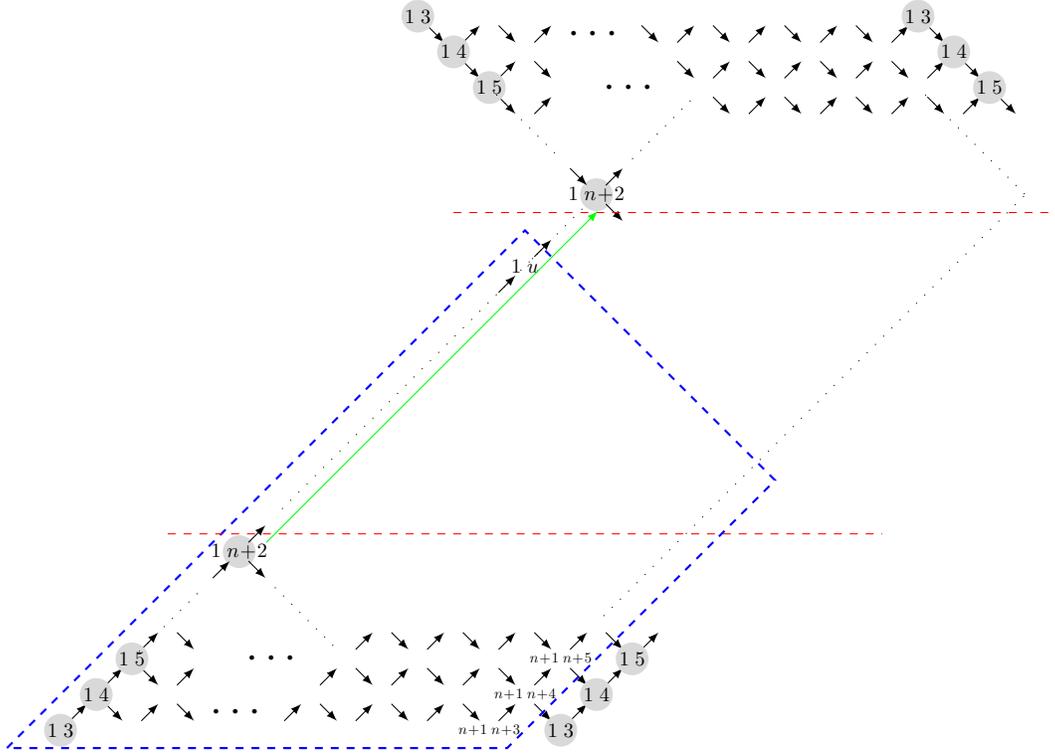
\end{center}

\begin{center}
\begin{figure}
\begin{tikzpicture}[scale=0.7]
\draw[very thick] (0,0) circle (5) ;
\foreach \a in {1,6,...,21} {
\draw (-14.4*\a+104.4:5) edge[very thick, color=black!80, out={-15.6-14.4*\a}, in={204.4-14.4*\a}] (-14.4*\a+75.6:5) ;
\draw (-14.4*\a+104.4:5) edge[very thick, color=black!60, out={-25.6-14.4*\a}, in={194.4-14.4*\a}] (-14.4*\a+61.2:5) ;
\draw (-14.4*\a+104.4:5) edge[very thick, color=black!40, out={-35.6-14.4*\a}, in={184.4-14.4*\a}] (-14.4*\a+46.8:5) ;
\draw (-14.4*\a+104.4:5) edge[very thick, color=black!20, out={-45.6-14.4*\a}, in={174.4-14.4*\a}] (-14.4*\a+32.4:5) ;
} ;
\foreach \x in {1,2,...,25} {
\draw (-14.4*\x+104.4:5) node {$\bullet$} ;
\draw (-14.4*\x+104.4:5.6) node {\x} ;
} ;
\end{tikzpicture}
\caption{A collection of arcs of the icosikaipentagon corresponding to a maximal rigid object in $\ac_{4,2}$.}
\label{figure: icosikaipentagon}
\end{figure}
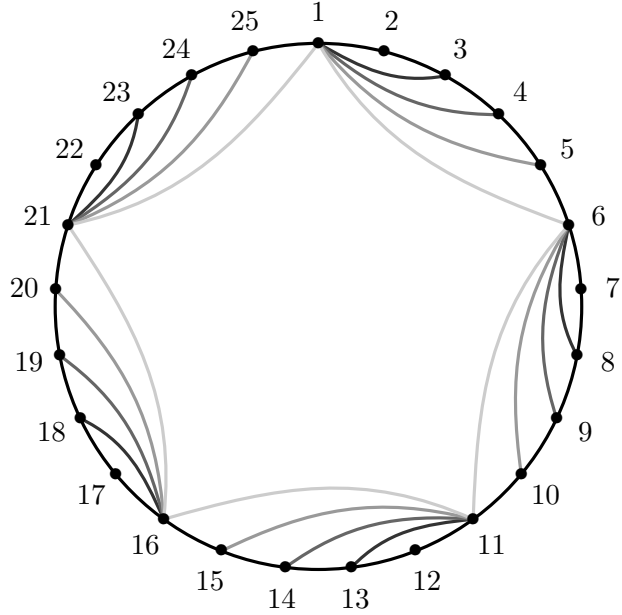
\end{center}

\begin{center}
\begin{figure}
\begin{tikzpicture}[scale=0.5]
\draw[very thick] (0,0) circle (5) ;
\foreach \a in {1,6,...,21} \draw[very thick, color=black!40] (-14.4*\a+104.4:5) -- (-14.4*\a+3.6:5) ;
\foreach \x in {1,2,...,25} {
\draw (-14.4*\x+104.4:5) node {$\bullet$} ;
\draw (-14.4*\x+104.4:5.7) node {\x} ;
} ;
\end{tikzpicture}
\caption{A collection of arcs of the icosikaipentagon corresponding to a non-rigid indecomposable object
of $\ac_{4,2}$.}
\label{figure: non-rigid type A}
\end{figure}
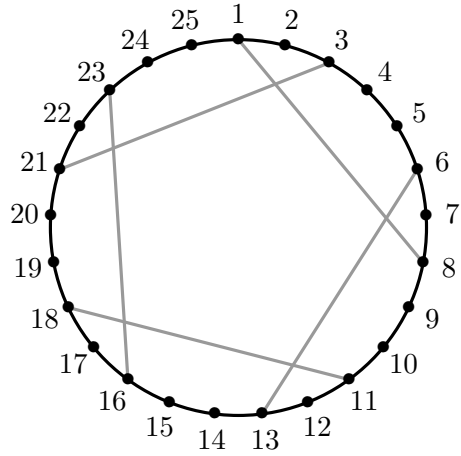
\end{center}

\begin{remark}
For an example of an arc corresponding to an indecomposable object which is not rigid, see figure~\ref{figure: non-rigid type A}.
\end{remark}

Let $\rc_{\ac_{n,t}}$ be the full additive subcategory of $\ac_{n,t}$
generated by the rigid objects.
We will show in section \ref{section: comparisons} that this category (up to equivalence)
only depends on $n$. Here we provide a first step towards that 
result. Recall that for an additive $\Hom$-finite 
Krull-Schmidt category $\uc$, the quiver $\qc_{\uc}$ of $\uc$ has 
vertices corresponding to the isomorphism classes of
indecomposable objects, and there are $\dim \irr(X,Y)$ arrows
from the vertex corresponding to $X$ to the 
vertex corresponding to $Y$, where $\irr(X,Y)$ is the 
space of irreducible maps from $X$ to $Y$.

\begin{proposition}\label{proposition: max rigid A}
The quiver $\qc_{\rc_{\ac_{n,t}}}\!\!\!\,$ is isomorphic to 
the quiver $\qc_n$ depicted in figure~\ref{figure: quiver}.
\end{proposition}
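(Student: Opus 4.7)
The plan is to use the parametrisation of indecomposable rigid objects in $\ac_{n,t}$ by arcs of the $N$-gon (with $N = (2t+1)(n+1)$) established in Lemma~\ref{lemma: rigid A}, together with the covering functor $F \colon \cat_{\rm{A}_{N-3}} \to \ac_{n,t}$ already constructed in its proof. Since $F$ is a $(2t+1)$-covering that commutes with the shift, it transports the Auslander--Reiten structure of the cluster category down to $\ac_{n,t}$: irreducible maps between indecomposables lift to irreducible maps in the cluster category, where they are described by the standard AR-quiver of type $\rm{A}_{N-3}$.

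Concretely, for each rigid indecomposable $X_{i,k} = [i\,(i+k)]$, with $1 \leq i \leq n+1$ and $2 \leq k \leq n+1$, the outgoing AR-arrows in the cluster category are of the two shapes $X_{i,k} \to X_{i,k+1}$ and $X_{i,k} \to X_{i+1,k-1}$, obtained by sliding one endpoint of the arc by one. The first step is to determine which of these arrows survive in $\qc_{\rc_{\ac_{n,t}}}$. In the interior region $2 < k < n+1$, both targets remain rigid and both arrows are preserved. At the outer boundary $k = n+1$, the target $X_{i,k+1}$ fails to be rigid, so that arrow is deleted. Since $\rc_{\ac_{n,t}}$ is a full additive subcategory, any irreducible morphism between rigids in $\rc$ is already irreducible in the ambient category $\ac_{n,t}$, so no new arrows appear upon restriction to the rigid subcategory.

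The next step is to locate the loops, typified by the green arrow of Figure~\ref{figure: Ant}. These arise from AR-arrows in $\db{A}{N-3}$ whose source and target are identified by the orbit-category automorphism $\tau^{t(n+1)-1}[1]$; to find them all, one tracks the action of $\tau^{t(n+1)-1}[1]$ on the boundary arcs $[i\,(i+n+1)]$, using the fractional Calabi--Yau identity $\tau^{-(N-2)} = [2]$ already exploited in the proof of Proposition~\ref{proposition: list CTO}. After this bookkeeping, the vertex set of $n(n+1)$ points together with the arrows just enumerated is compared term by term to the triangular quiver $\qc_n$ of Figure~\ref{figure: quiver} to conclude.

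The main obstacle is the boundary analysis: one must check that the orbit identification creates exactly the loops appearing on the outer row of $\qc_n$ and no others, and that the arrows near the boundary also match, including verifying that potential identifications inside the fundamental domain do not collapse distinct arrows or create unwanted multiplicities. This amounts to an explicit but slightly delicate computation with the rotation-type automorphism $[i\,j] \mapsto [(i+n+1)\,(j+n+1)]$ applied to arcs at the top of the domain drawn in Figure~\ref{figure: Ant}; by contrast, the interior computation is essentially a direct translation of the cluster-category AR-quiver through the covering functor $F$.
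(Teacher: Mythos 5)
Your argument goes wrong at the central point of this proposition. The claim that ``since $\rc_{\ac_{n,t}}$ is a full additive subcategory, any irreducible morphism between rigids in $\rc_{\ac_{n,t}}$ is already irreducible in the ambient category, so no new arrows appear'' has the implication backwards and is false here. Passing to a full subcategory can only \emph{create} irreducible morphisms, never destroy them: a morphism that is reducible in $\ac_{n,t}$ because every factorisation passes through non-rigid indecomposables becomes irreducible in $\rc_{\ac_{n,t}}$, where those intermediate objects are no longer available. This is precisely how the loops of $\qc_n$ arise, so your claim, taken at face value, would produce a quiver with no loops and hence the wrong answer. (The correct ``easy'' direction, which the paper's proof uses, is the converse: an irreducible map of $\ac_{n,t}$ with source and target in $\rc_{\ac_{n,t}}$ stays irreducible in $\rc_{\ac_{n,t}}$.)

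Your attempt to then recover the loops rests on an incorrect mechanism and would also fail. The loops do not come from AR-arrows of $D^b(\field A_{(2t+1)(n+1)-3})$ whose source and target are identified by $\tau^{t(n+1)-1}[1]$: that automorphism translates the AR-quiver by many meshes (the whole width of the fundamental domain in Figure~\ref{figure: Ant}), so no single AR-arrow has its endpoints identified, and your bookkeeping would again find no loops. The green arrow in Figure~\ref{figure: Ant} depicts a \emph{long} composite of irreducible maps in the derived category, from $(1\;n+2)$ to a translate of itself under the orbit automorphism, all of whose intermediate indecomposables lie above the red line and are therefore non-rigid. What actually has to be verified --- and what the paper's proof does by computations with Hom-hammocks in the derived category --- is that this composite is non-zero, that it does not factor through any rigid object (hence is irreducible in $\rc_{\ac_{n,t}}$), and that beyond these endomorphisms and the surviving AR-arrows there are no further irreducible maps in $\rc_{\ac_{n,t}}$. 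None of these checks is supplied by your proposal, and the general principle you invoke in their place is not valid.
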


\begin{proof}
Consider the Auslander-Reiten quiver of $\ac_{n,t}$ depicted in
figure \ref{figure: Ant}.
Clearly, the
irreducible maps in $\ac_{n,t}$ with source and target in 
$\rc_{\ac_{n,t}}$, are also irreducible in $\rc_{\ac_{n,t}}$.  
It is also straightforward to verify,
by computations in the derived category 
$D^b(\field A_{(2t+1)(n+1)-3})$, that the map from $(1 \; n+2)$ 
to $(1 \; n+2)$ (and all shifts of this) is irreducible in $\rc_{\ac_{n,t}}$, and that there are no further irreducible maps
in $\rc_{\ac_{n,t}}$.
Hence the quiver $\qc_{\rc_{\ac_{n,t}}}$ is isomorphic to 
the quiver $\qc_n$ depicted in figure \ref{figure: quiver}.
\end{proof}

As a special case of the computations necessary for the proof of
Proposition \ref{proposition: max rigid A} we also obtain the 
following. Note that the cluster tilting case $t=1$ of this fact can also
be found in~\cite{bo}.

\begin{corollary}\label{corollary: max rigid A}
Let $n,t\in\nb$ and let $T$ be the maximal rigid object of the orbit category $\db{A}{(2t+1)(n+1)-3}/\tau^{t(n+1)-1}[1]$
corresponding to the collection of arcs generated by
$[1\;3],[1\;4],\ldots,[1\;n+2]$ (see Lemma~\ref{lemma: rigid A}). Then the
endomorphism algebra of $T$ is given by the quiver
\[
\xymatrix{ 1 \dr & 2 \dr & 3 \ar@{..}[r] & n-1 \dr & n \ar@(ur,dr)^{\al} },
\]
with ideal of relations generated by $\al^2$.
\end{corollary}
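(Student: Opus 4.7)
The plan is to compute $\End_{\ac_{n,t}}(T)$ directly, using the covering functor $F\colon \cat_{A_{N-3}}\to\ac_{n,t}$ (with $N=(2t+1)(n+1)$) constructed in the proof of Lemma~\ref{lemma: rigid A}, together with the geometric model of type-$\mathrm{A}$ cluster categories from \cite{ccs}. Label the indecomposable summands of $T$ by $T_k=[1,k+2]$ for $k=1,\dots,n$, so that $T_n=[1,n+2]$ is the summand of maximal arc length.

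First, I would identify the underlying quiver of $\End(T)^{\op}$ by restricting the quiver $\qc_n=\qc_{\rc_{\ac_{n,t}}}$ of Proposition~\ref{proposition: max rigid A} to $\add T$. The irreducible maps in $\rc_{\ac_{n,t}}$ with source and target in $\{T_1,\dots,T_n\}$ are precisely the elongation maps $T_k\to T_{k+1}$ coming from the arc inclusion $[1,k+2]\hookrightarrow[1,k+3]$ (for $k=1,\dots,n-1$), together with the loop $\alpha$ at $T_n$ identified in the proof of Proposition~\ref{proposition: max rigid A}. Every other irreducible in $\qc_n$ involves an indecomposable of the form $[i,i+j]$ with $i>1$, which is not a summand of $T$. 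This yields the quiver claimed in the statement.

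Next, I would verify the relation $\alpha^2=0$. By the covering formula,
\[
\Hom_{\ac_{n,t}}(T_n,T_n)\;=\;\bigoplus_{r=0}^{2t}\Hom_{\cat_{A_{N-3}}}\!\bigl([1,n+2],\,[1+r(n+1),\,n+3+r(n+1)]\bigr).
\]
Recall that Hom-spaces between indecomposables in a type-$\mathrm{A}$ cluster category are at most one-dimensional. Inspecting the Hom-hammock of $[1,n+2]$ in the AR quiver of $\cat_{A_{N-3}}$ (cf.\ Figure~\ref{figure: Ant}), only the term $r=0$ (giving the identity) and the term $r=1$ (the arcs $[1,n+2]$ and $[n+2,2n+3]$ share the vertex $n+2$, giving $\alpha$) are nonzero; the summands for $r=2,\dots,2t$ all vanish. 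Hence $\dim\End_{\ac_{n,t}}(T_n)=2$, which forces $\alpha^2=0$.

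Finally, a parallel computation of $\dim\Hom_{\ac_{n,t}}(T_k,T_l)$ for the remaining pairs shows that this dimension equals $1$ whenever $k\leq l<n$, equals $2$ whenever $k\leq l=n$ (the extra contribution again coming from the rotated arc $[n+2,l+n+3]$), and vanishes when $k>l$. Summing yields $\dim\End_{\ac_{n,t}}(T)=\tfrac{n(n+3)}{2}$, which matches the path count in the proposed quiver with the single relation $\alpha^2=0$, so no further relations are needed. The principal technical obstacle is the uniform vanishing $\Hom_{\cat_{A_{N-3}}}([1,n+2],[1+r(n+1),n+3+r(n+1)])=0$ for $r\geq 2$; this boils down to a Hom-hammock inspection, but the dependence of the polygon's size on $t$ requires a careful uniform argument.
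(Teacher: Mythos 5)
Your proposal is correct and follows essentially the same route as the paper: the paper obtains this corollary as a special case of the computations behind Proposition~\ref{proposition: max rigid A}, which rest on the same $(2t+1)$-covering functor $\cat_{{\rm A}_{N-3}}\to\ac_{n,t}$ and the arc model of \cite{ccs}, and your uniform vanishing of the summands for $r\geq 2$ (both rotated endpoints land strictly on one side of the arc $[1\;n+2]$) is exactly the Hom-hammock fact the paper reads off from figure~\ref{figure: Ant}. The one point you should make explicit is that a dimension match between $\field Q/(\al^2)$ and $\End(T)$ only yields an isomorphism once you know the path images span, i.e.\ that the composites of the elongation maps $T_k\to T_{k+1}\to\cdots\to T_l$ (and their post-compositions with $\al$ when $l=n$) are non-zero and linearly independent; this is also what guarantees that restricting $\qc_n$ to $\add T$ really gives the Gabriel quiver rather than a subquiver of it. That verification is immediate in the arc model (and is the ``straightforward'' computation the paper likewise leaves to the reader), and the small index slip $n+3+r(n+1)$ versus $n+2+r(n+1)$ in your covering formula is harmless.
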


\begin{remark}
See figure \ref{figure: icosikaipentagon} for the 
collection of arcs corresponding to
the maximal rigid object in Corollary \ref{corollary: max rigid A}. 
\end{remark}

\begin{center}
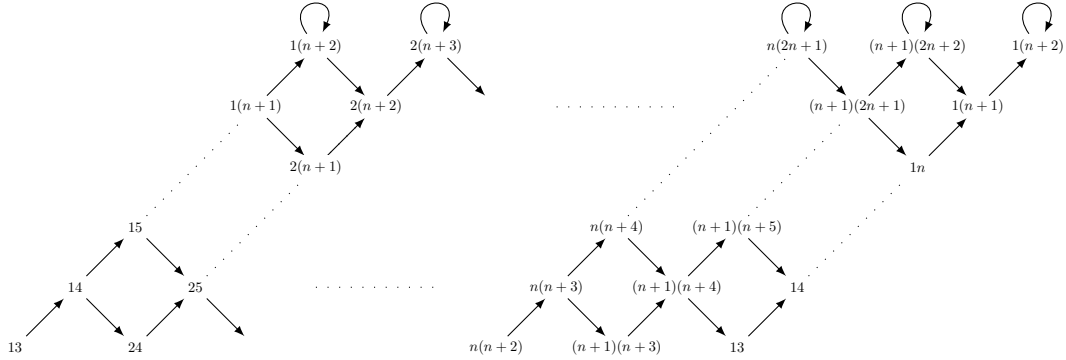
\begin{figure}
\begin{tikzpicture}[scale=0.8,
fl/.style={->,shorten <=6pt, shorten >=6pt,>=latex}]
\coordinate (13) at (0,0) ;
\coordinate (14) at (1,1) ;
\coordinate (15) at (2,2) ;
\coordinate (1n+1) at (4,4) ;
\coordinate (1n+2) at (5,5) ;
\coordinate (24) at (2,0) ;
\coordinate (25) at (3,1) ;
\coordinate (2n+1) at (5,3) ;
\coordinate (2n+2) at (6,4) ;
\coordinate (2n+3) at (7,5) ;
\coordinate (35) at (4,0) ;
\coordinate (3n+3) at (8,4) ;
\coordinate (13b) at (12,0) ;
\coordinate (14b) at (13,1) ;
\coordinate (1nb) at (15,3) ;
\coordinate (1n+1b) at (16,4) ;
\coordinate (1n+2b) at (17,5) ;
\coordinate (nn+2) at (8,0) ;
\coordinate (nn+3) at (9,1) ;
\coordinate (nn+4) at (10,2) ;
\coordinate (n2n+1) at (13,5) ;
\coordinate (n+1n+3) at (10,0) ;
\coordinate (n+1n+4) at (11,1) ;
\coordinate (n+12n+1) at (14,4) ;
\coordinate (n+12n+2) at (15,5) ;
\coordinate (13b) at (12,0) ;
\coordinate (14b) at (13,1) ;
\coordinate (1nb) at (15,3) ;
\coordinate (1n+1b) at (16,4) ;
\coordinate (n+1n+5) at (12,2) ;

\draw[fl] (nn+2) -- (nn+3) ;
\draw[fl] (nn+3) -- (n+1n+3) ;
\draw[fl] (nn+3) -- (nn+4) ;
\draw[fl] (nn+4) -- (n+1n+4) ;
\draw[fl] (n2n+1) -- (n+12n+1) ;
\draw[fl] (13) -- (14) ;
\draw[fl] (14) -- (15) ;
\draw[fl] (14) -- (24) ;
\draw[fl] (15) --(25) ;
\draw[fl] (1n+1) --(1n+2) ; 
\draw[fl] (1n+1) --(2n+1) ;
\draw[fl] (1n+2) --(2n+2) ; 
\draw[fl] (24) --(25) ;
\draw[fl] (25) --(35) ;
\draw[fl] (2n+1) --(2n+2) ;
\draw[fl] (2n+2) --(2n+3) ; 
\draw[fl] (2n+3) --(3n+3); 
\draw[fl] (n+1n+3) --(n+1n+4) ;
\draw[fl] (n+1n+4) --(13b) ;
\draw[fl] (n+12n+1) --(n+12n+2) ; 
\draw[fl] (n+12n+2) --(1n+1b) ; 
\draw[fl] (13b) --(14b) ;
\draw[fl] (1n+1b) --(1n+2b) ; 
\draw[fl] (n+12n+1) --(1nb) ;
\draw[fl] (1nb) --(1n+1b) ;
\draw[fl] (n+1n+4) --(n+1n+5) ;
\draw[fl] (n+1n+5) --(14b) ;

\draw (n2n+1) edge[out=125, in=65, loop, distance=1cm, fl]  (n2n+1) ;
\draw (1n+2) edge[out=125, in=65, loop, distance=1cm, fl]  (1n+2) ;
\draw (2n+3) edge[out=125, in=65, loop, distance=1cm, fl]  (2n+3) ; 
\draw (n+12n+2) edge[out=125, in=65, loop, distance=1cm, fl] (n+12n+2) ; 
\draw (1n+2b) edge[out=125, in=65, loop, distance=1cm, fl] (1n+2b) ; 
\draw[loosely dotted, shorten <=6pt, shorten >=6pt] (15) --(1n+1) ;
\draw[loosely dotted, shorten <=6pt, shorten >=6pt] (25) --(2n+2) ;
\draw[loosely dotted] (5,1) --(7,1) ;
\draw[loosely dotted] (9,4) --(11,4) ;
\draw[loosely dotted, shorten <=6pt, shorten >=6pt] (n+1n+5) --(n+12n+1) ;
\draw[loosely dotted, shorten <=6pt, shorten >=6pt] (14b) --(1nb) ;
\draw[loosely dotted, shorten <=6pt, shorten >=6pt] (nn+4) --(n2n+1) ;
\draw (13) node[scale=0.5] {13} ;
\draw (14) node[scale=0.5] {14} ;
\draw (15) node[scale=0.5] {15} ;
\draw (24) node[scale=0.5] {24} ;
\draw (25) node[scale=0.5] {25} ;
\draw (1n+1) node[scale=0.5] {$1(n+1)$} ;
\draw (1n+2) node[scale=0.5] {$1(n+2)$} ;
\draw (2n+1) node[scale=0.5, fill=white] {$2(n+1)$} ;
\draw (2n+2) node[scale=0.5] {$2(n+2)$} ;
\draw (2n+3) node[scale=0.5] {$2(n+3)$} ;
\draw (n+1n+3) node[scale=0.5] {$(n+1)(n+3)$} ;
\draw (n+1n+4) node[scale=0.5] {$(n+1)(n+4)$} ;
\draw (n+12n+1) node[scale=0.5] {$(n+1)(2n+1)$};
\draw (n+12n+2) node[scale=0.5] {$(n+1)(2n+2)$};
\draw (13b) node[scale=0.5] {13} ;
\draw (14b) node[scale=0.5] {14} ;
\draw (1nb) node[scale=0.5] {$1n$} ;
\draw (1n+1b) node[scale=0.5] {$1(n+1)$} ;
\draw (1n+2b) node[scale=0.5] {$1(n+2)$} ;
\draw (nn+3) node[scale=0.5] {$n(n+3)$} ;
\draw (nn+4) node[scale=0.5] {$n(n+4)$} ;
\draw (nn+2) node[scale=0.5] {$n(n+2)$};
\draw (n2n+1) node[scale=0.5] {$n(2n+1)$};
\draw (n+1n+5) node[scale=0.5] {$(n+1)(n+5)$};

\end{tikzpicture}
\caption{The quiver $\mathcal{Q}_n$}
\label{figure: quiver}
\end{figure}
\end{center}

\subsubsection{Type $\rm{D}$}\label{subsubsection: max rigid D}

Let $n,t\geq 1$ and let $P_{n,t}$ be a once-punctured $2t(n+1)$-gon.
We denote by $\rho$ the automorphism on the tagged arcs (see \cite{s,fst})
obtained by rotating by $\frac{\pi}{t}$ and switching tags, as in figure~\ref{figure: non-rigid type D}.

Recall that $\dc_{n,t}$  is the orbit category $\db{D}{2t(n+1)}/\tau^{n+1}\vph^n$.

\begin{landscape}
\begin{center}
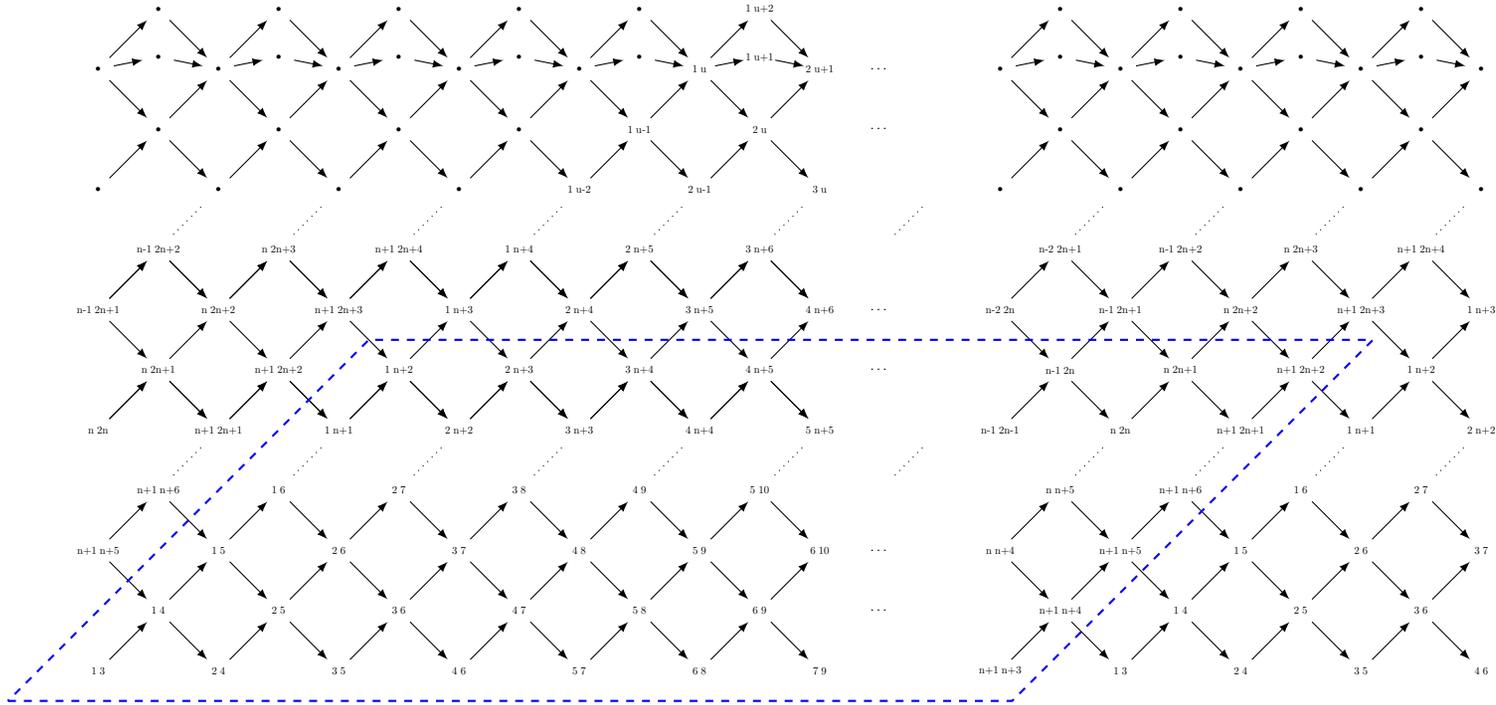
\begin{figure}
\begin{tikzpicture}[scale=0.80,
fl/.style={->,shorten <=6pt, shorten >=6pt,>=latex},
fli/.style={dotted,->,shorten <=8pt, shorten >=8pt,>=space}]
\newcommand\cs{0.37}
\foreach \x in {0,1,...,4} {
\foreach \y in {3,4,...,6} {
\newcount\r ;
\pgfmathsetcount{\r}{\x +\x + \y} ;
\pgfmathparse{int(\x+\y)}\let\z\pgfmathresult ;
\pgfmathparse{int(\x +1)}\let\h\pgfmathresult ;
\draw (\r -3 ,\y-3)  node[scale=\cs] {\h$\;$\z};
};
};

\foreach \x in {0,1,...,5} {
\draw[fl] (2*\x+1, 1) -- (2*\x + 2,2) ;
\draw[fl] (2*\x, 2) -- (2*\x +1,1) ;
\foreach \y in {0,1,...,3} { 
\draw[fl] (2*\x,2*\y) -- (2*\x + 1,2*\y+1) ;
\draw[fl] (2*\x +1,2*\y +1) -- (2*\x+2,2*\y) ;
};
};
\draw (13, 1)  node[scale=0.5] {$\cdots$};
\draw (13, 2)  node[scale=0.5] {$\cdots$};
\foreach \x in {0,1,...,3} {
\draw[fl] (2*\x+16, 1) -- (2*\x + 17,2) ;
\draw[fl] (2*\x +15, 2) -- (2*\x +16,1) ;
\foreach \y in {0,1} { 
\draw[fl] (2*\x +15,2*\y) -- (2*\x + 16,2*\y+1) ;
\draw[fl] (2*\x +16,2*\y +1) -- (2*\x+17,2*\y) ;
};
};
\foreach \x in {0,2,...,8,15,17,19,21,23} {
\draw (\x , 10)  node[scale=\cs] {$\bullet$};
};
\foreach \x in {1,3,...,9,16,18,20,22} {
\draw (\x , 10.2)  node[scale=\cs] {$\bullet$};
};
\foreach \x in {1,3,...,9,16,18,20,22} {
\draw (\x , 11)  node[scale=\cs] {$\bullet$};
};
\foreach \x in {1,3,...,7,16,18,20,22} {
\draw (\x , 9)  node[scale=\cs] {$\bullet$};
};
\foreach \x in {0,2,...,6,15,17,19,21,23} {
\draw (\x , 8)  node[scale=\cs] {$\bullet$};
};
\draw (0 , 2)  node[scale=\cs] {n+1$\;$n+5};
\draw (1 , 3)  node[scale=\cs] {n+1$\;$n+6};
\draw (10 , 0)  node[scale=\cs] {6$\;$8};
\draw (12 , 0)  node[scale=\cs] {7$\;$9};
\draw (11 , 1)  node[scale=\cs] {6$\;$9};
\draw (12 , 2)  node[scale=\cs] {6$\;$10};

\draw (15 , 0)  node[scale=\cs] {n+1$\;$n+3};
\draw (17 , 0)  node[scale=\cs] {1$\;$3};
\draw (19 , 0)  node[scale=\cs] {2$\;$4};
\draw (21 , 0)  node[scale=\cs] {3$\;$5};
\draw (23 , 0)  node[scale=\cs] {4$\;$6};
\draw (16 , 1)  node[scale=\cs] {n+1$\;$n+4};
\draw (18 , 1)  node[scale=\cs] {1$\;$4};
\draw (20,  1)  node[scale=\cs] {2$\;$5};
\draw (22 , 1)  node[scale=\cs] {3$\;$6};
\draw (15 , 2)  node[scale=\cs] {n$\;$n+4};
\draw (17 , 2)  node[scale=\cs] {n+1$\;$n+5};
\draw (19 , 2)  node[scale=\cs] {1$\;$5};
\draw (21 , 2)  node[scale=\cs] {2$\;$6};
\draw (23 , 2)  node[scale=\cs] {3$\;$7};
\draw (16 , 3)  node[scale=\cs] {n$\;$n+5};
\draw (18 , 3)  node[scale=\cs] {n+1$\;$n+6};
\draw (20 , 3)  node[scale=\cs] {1$\;$6};
\draw (22 , 3)  node[scale=\cs] {2$\;$7};

\draw (0 , 4)  node[scale=\cs] {n$\;$2n};
\draw (2 , 4)  node[scale=\cs] {n+1$\;$2n+1};
\draw (4 , 4)  node[scale=\cs] {1$\;$n+1};
\draw (6 , 4)  node[scale=\cs] {2$\;$n+2};
\draw (8 , 4)  node[scale=\cs] {3$\;$n+3};
\draw (10 , 4)  node[scale=\cs] {4$\;$n+4};
\draw (12 , 4)  node[scale=\cs] {5$\;$n+5};
\draw (1 , 5)  node[scale=\cs] {n$\;$2n+1};
\draw (3 , 5)  node[scale=\cs] {n+1$\;$2n+2};
\draw (5 , 5)  node[scale=\cs] {1$\;$n+2};
\draw (7 , 5)  node[scale=\cs] {2$\;$n+3};
\draw (9 , 5)  node[scale=\cs] {3$\;$n+4};
\draw (11 , 5)  node[scale=\cs] {4$\;$n+5};
\draw (0 , 6)  node[scale=\cs] {n-1$\;$2n+1};
\draw (2 , 6)  node[scale=\cs] {n$\;$2n+2};
\draw (4 , 6)  node[scale=\cs] {n+1$\;$2n+3};
\draw (6 , 6)  node[scale=\cs] {1$\;$n+3};
\draw (8 , 6)  node[scale=\cs] {2$\;$n+4};
\draw (10 , 6)  node[scale=\cs] {3$\;$n+5};
\draw (12 , 6)  node[scale=\cs] {4$\;$n+6};
\draw (1 , 7)  node[scale=\cs] {n-1$\;$2n+2};
\draw (3 , 7)  node[scale=\cs] {n$\;$2n+3};
\draw (5 , 7)  node[scale=\cs] {n+1$\;$2n+4};
\draw (7 , 7)  node[scale=\cs] {1$\;$n+4};
\draw (9 , 7)  node[scale=\cs] {2$\;$n+5};
\draw (11 , 7)  node[scale=\cs] {3$\;$n+6};

\draw (15 , 4)  node[scale=\cs] {n-1$\;$2n-1};
\draw (17 , 4)  node[scale=\cs] {n$\;$2n};
\draw (19 , 4)  node[scale=\cs] {n+1$\;$2n+1};
\draw (21, 4)  node[scale=\cs] {1$\;$n+1};
\draw (23, 4)  node[scale=\cs] {2$\;$n+2};
\draw (16 , 5)  node[scale=\cs] {n-1$\;$2n};
\draw (18 , 5)  node[scale=\cs] {n$\;$2n+1};
\draw (20 , 5)  node[scale=\cs] {n+1$\;$2n+2};
\draw (22 , 5)  node[scale=\cs] {1$\;$n+2};
\draw (15 , 6)  node[scale=\cs] {n-2$\;$2n};
\draw (17 , 6)  node[scale=\cs] {n-1$\;$2n+1};
\draw (19 , 6)  node[scale=\cs] {n$\;$2n+2};
\draw (21 , 6)  node[scale=\cs] {n+1$\;$2n+3};
\draw (23 , 6)  node[scale=\cs] {1$\;$n+3};
\draw (16 , 7)  node[scale=\cs] {n-2$\;$2n+1};
\draw (18 , 7)  node[scale=\cs] {n-1$\;$2n+2};
\draw (20, 7)  node[scale=\cs] {n$\;$2n+3};
\draw (22 , 7)  node[scale=\cs] {n+1$\;$2n+4};

\draw (8 , 8)  node[scale=\cs] {1$\;$u-2};
\draw (9 , 9)  node[scale=\cs] {1$\;$u-1};
\draw (10 , 10)  node[scale=\cs] {1$\;$u};
\draw (11 , 10.2)  node[scale=\cs] {1$\;$u+1};
\draw (11 , 11)  node[scale=\cs] {1$\;$u+2};
\draw (10 , 8)  node[scale=\cs] {2$\;$u-1};
\draw (11 , 9)  node[scale=\cs] {2$\;$u};
\draw (12 , 10)  node[scale=\cs] {2$\;$u+1};
\draw (12 , 8)  node[scale=\cs] {3$\;$u};

\foreach \x in {0,1,...,5} {
\draw[fl] (2*\x+1, 5) -- (2*\x + 2,6) ;
\draw[fl] (2*\x, 6) -- (2*\x +1,5) ;
\foreach \y in {2,3} { 
\draw[fl] (2*\x,2*\y) -- (2*\x + 1,2*\y+1) ;
\draw[fl] (2*\x +1,2*\y +1) -- (2*\x+2,2*\y) ;
};
};
\draw (13, 5)  node[scale=0.5] {$\cdots$};
\draw (13, 6)  node[scale=0.5] {$\cdots$};
\foreach \x in {0,1,...,3} {
\draw[fl] (2*\x+16, 5) -- (2*\x + 17,6) ;
\draw[fl] (2*\x +15, 6) -- (2*\x +16,5) ;
\foreach \y in {2,3} { 
\draw[fl] (2*\x +15,2*\y) -- (2*\x + 16,2*\y+1) ;
\draw[fl] (2*\x +16,2*\y +1) -- (2*\x+17,2*\y) ;
};
};

\foreach \x in {0,1,...,5} {
\draw[fl] (2*\x+1, 9) -- (2*\x + 2,10) ;
\draw[fl] (2*\x, 10) -- (2*\x +1,9) ;
\foreach \y in {4,5} { 
\draw[fl] (2*\x,2*\y) -- (2*\x + 1,2*\y+1) ;
\draw[fl] (2*\x +1,2*\y +1) -- (2*\x+2,2*\y) ;
};
};
\draw (13, 9)  node[scale=0.5] {$\cdots$};
\draw (13, 10)  node[scale=0.5] {$\cdots$};
\foreach \x in {0,1,...,3} {
\draw[fl] (2*\x+16, 9) -- (2*\x + 17,10) ;
\draw[fl] (2*\x +15, 10) -- (2*\x +16,9) ;
\foreach \y in {4,5} { 
\draw[fl] (2*\x +15,2*\y) -- (2*\x + 16,2*\y+1) ;
\draw[fl] (2*\x +16,2*\y +1) -- (2*\x+17,2*\y) ;
};
};

\foreach \x in {0,1,...,5} {
\draw[fl] (2*\x,10) -- (2*\x + 1,10.2) ;
\draw[fl] (2*\x+1,10.2) -- (2*\x + 2,10) ;
};

\foreach \x in {0,1,...,3} {
\draw[fl] (2*\x+15,10) -- (2*\x + 16,10.2) ;
\draw[fl] (2*\x+16,10.2) -- (2*\x + 17,10) ;
};

\foreach \x in {1,2,...,7} {
\foreach \y in {1,2} {
\draw[fli] (2*\x -1, 4*\y-1) -- (2*\x, 4*\y) ;
};
};

\foreach \x in {8,9,...,11} {
\foreach \y in {1,2} {
\draw[fli] (2*\x, 4*\y-1) -- (2*\x+1, 4*\y) ;
};
\label{key}};

\draw[thick, dashed, blue] (-1.5,-0.5) -- ++(16.7,0) -- ++(6,6) -- ++(-16.7,0) -- cycle ;




\end{tikzpicture}
\caption{The Auslander-Reiten quiver of $\dc_{n,t}$.
The objects in $\rc_{\dc_{n,t}}$ are in the
area inside the dashed blue lines. Here $u=2t(n+1)$.}
\label{figure: arcs type D}
\end{figure}
\end{center}
\end{landscape}

\begin{lemma}\label{lemma: rigid D}
\begin{enumerate}
 \item There is a bijection between isomorphism classes of basic objects in $\dc_{n,t}$ and
 collections of arcs of $P_{n,t}$ which are stable under $\rho$.
Such a bijection is illustrated in figure~\ref{figure: arcs type D}. 
 \item Under the above bijection, rigid objects correspond to non-crossing collections
 of arcs. In particular:
  \begin{enumerate}
   \item The isomorphism classes of indecomposable \sloppy rigid objects in $\dc_{n,t}$ are parametrised by the arcs
   $[i\;(i+2)],\ldots,[i\;(i+n+1)]$ for $i=1,\ldots,n+1$.
   \item  The maximal non-crossing collections which are stable under $\rho$
   correspond to (isoclasses of) basic maximal rigid objects.
  \end{enumerate}
\end{enumerate}
\end{lemma}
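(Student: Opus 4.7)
The plan is to follow the strategy of the proof of Lemma~\ref{lemma: rigid A}, using Schiffler's geometric model \cite{s} of the cluster category of type $\rm{D}$ in place of that of \cite{ccs} for type $\rm{A}$. Under Schiffler's model, indecomposable objects of $\cat_{D_{2t(n+1)}}$ correspond bijectively to tagged arcs on the once-punctured $2t(n+1)$-gon $P_{n,t}$, and $\Ext^{1}$ is detected by crossings of tagged arcs. Since $\dc_{n,t}$ is $2$-Calabi--Yau, the universal property of orbit categories yields a covering functor $F\colon \cat_{D_{2t(n+1)}} \longrightarrow \dc_{n,t}$ commuting with shifts, so that
\[
\Ext^{1}_{\dc_{n,t}}(X,Y)\;\simeq\;\bigoplus_{FY'\simeq Y}\Ext^{1}_{\cat_{D_{2t(n+1)}}}(X,Y')
\]
for all objects $X,Y$ of $\dc_{n,t}$, exactly as was used in the type $\rm{A}$ proof.

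The main step is to identify, under Schiffler's bijection, the automorphism $\tau^{n+1}\vph^{n}$ of $\cat_{D_{2t(n+1)}}$ with the geometric automorphism $\rho$ of tagged arcs of $P_{n,t}$, and to compute its order. In Schiffler's model the AR-translation $\tau$ rotates the boundary of $P_{n,t}$ by one position, so $\tau^{n+1}$ rotates by the angle $\pi/t$; the order-two diagram automorphism $\vph$ corresponds to the tag-switching involution at the puncture, so $\vph^{n}$ switches tags exactly when $n$ is odd. A direct verification then matches $\tau^{n+1}\vph^{n}$ with $\rho$ (rotation by $\pi/t$ together with a tag switch), after accounting for how $\tau^{n+1}$ itself affects tags at the puncture as a function of the parity of $n$. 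Showing that this automorphism has order $2t$ then gives that $F$ is a $2t$-fold covering, which combined with the isomorphism above yields the bijection of part~(1) between isomorphism classes of basic objects in $\dc_{n,t}$ and $\rho$-stable collections of tagged arcs on $P_{n,t}$.

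For part (2), the crossing characterisation of $\Ext^{1}$ transfers through $F$: a basic object in $\dc_{n,t}$ is rigid if and only if its associated $\rho$-stable collection of arcs is pairwise non-crossing. Assertion (a) then follows by reading off a transversal for the $\rho$-action on those non-boundary arcs whose full $\rho$-orbit is pairwise non-crossing, namely $\{[i\,(i+2)],\ldots,[i\,(i+n+1)]\}$ for $i=1,\ldots,n+1$; arcs of longer span, or arcs ending at the puncture, produce a self-crossing under $\rho$ and hence give non-rigid indecomposables. Assertion~(b) is then formal, since maximality of a rigid object corresponds to maximality of its non-crossing collection of arcs. The principal technical obstacle is the explicit identification of $\tau^{n+1}\vph^{n}$ with $\rho$, together with the parity-dependent bookkeeping of tags at the puncture; once that is pinned down, everything else transports cleanly through $F$, exactly as in the type $\rm{A}$ proof.
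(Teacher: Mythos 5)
Your proposal is correct and follows essentially the same route as the paper: the paper's proof likewise invokes the $2t$-fold covering functor from $\cat_{{\rm D}_{2t(n+1)}}$, transfers the crossing description of $\Ext^{1}$ from Schiffler's model as in the type $\rm{A}$ case, and observes that $\vph$ switches tags while $\tau$ rotates and switches tags at the puncture, so that $\tau^{n+1}\vph^{n}$ realises $\rho$ and arcs ending at the puncture give non-rigid indecomposables. Your version merely spells out the parity bookkeeping and the order computation that the paper leaves implicit.
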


\begin{center}
\begin{figure}
\begin{tikzpicture}[scale=0.7]
\draw[very thick] (0,0) circle (3) ;
\foreach \x in {1,2,...,12} {
\draw (-30*\x+120:3) node {$\bullet$} ;
\draw (-30*\x+120:3.6) node {\x} ;
} ;
\draw (0,0) node {$\bullet$} ;
\draw (0,-0.5) node {$0$} ;
\draw[thick] (90:3) -- (0,0) node[midway, left, scale=0.85] {$\al$} ;
\draw[thick] (60:3) -- (0,0) node[midway, right, scale=0.85] {$\tau\al$} node[near end, sloped, rotate=90, scale=0.75] {$\bowtie$} ;
\begin{scope}[xshift=10cm]
\draw[very thick] (0,0) circle (3) ;
\foreach \a in {1,9,17}
\draw[thick, color=black!50] (-15*\a+90:3) -- (0,0) ;
\foreach \b in {5,13,21}
\draw[thick, color=black!50] (-15*\b+90:3) -- (0,0)
node[near end, sloped, rotate=90, scale=0.8] {$\bowtie$} ;
\draw (0,0) node {$\bullet$} ;
\foreach \x in {1,2,...,24} {
\draw (-15*\x+105:3) node {$\bullet$} ;
\draw (-15*\x+105:3.4) node[scale=0.8] {\x} ;
} ;
\end{scope}
\end{tikzpicture}
\caption{Action of $\tau$ on a tagged arc (left) and a non-rigid indecomposable object of $\dc_{3,3}$ (right).}
\label{figure: non-rigid type D}
\end{figure}
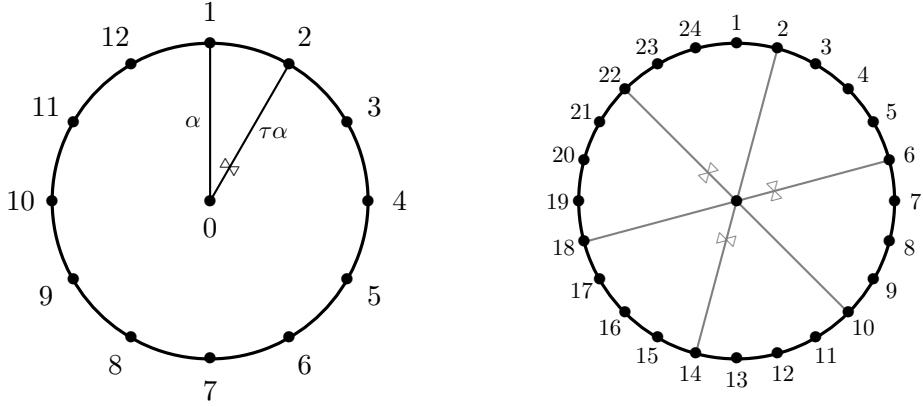
\end{center}

\begin{proof}
\sloppy The proof is similar to that of Lemma~\ref{lemma: rigid A}.
There is a $2t$-covering functor  from the cluster category $\cat_{{\rm D}_{2t(n+1)}}$ to the triangulated orbit category
$\dc_{t,n} = \db{D}{2t(n+1)}/\tau^{n+1}\vph^n$. We note that $\vph$ acts on arcs by switching tags and that
$\tau$ acts on arcs $[i\; 0]$ with an endpoint at the puncture 0 by
sending it to $[i+1\; 0]$ and by switching tags.
Therefore an arc with an endpoint at the puncture corresponds to a non-rigid indecomposable object in $\dc_{n,t}$ and
the rest of the proof is similar to that in type $\rm{A}$ above.
\end{proof}

Consider now the full additive subcategory $\rc_ {\dc_{n,t}}$, generated
by the rigid objects in $\dc_{n,t}$. We will show, in Section
\ref{section: comparisons}, that $\rc_ {\dc_{n,t}}$ is equivalent to $\rc_ {\ac_{n,t}}$.
For this, we will need the following.

\begin{proposition}\label{proposition: max rigid D}
The quiver $\qc_{\rc_{\dc_{n,t}}}$ is isomorphic to 
the quiver $\qc_n$ depicted in figure~\ref{figure: quiver}.
\end{proposition}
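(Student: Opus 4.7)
The plan is to proceed in close parallel with the proof of Proposition~\ref{proposition: max rigid A}. By Lemma~\ref{lemma: rigid D}, the indecomposable rigid objects in $\dc_{n,t}$ are parametrised by the arcs $[i\,(i+2)], \ldots, [i\,(i+n+1)]$ for $i = 1, \ldots, n+1$, and hence are the same in number and arc-labels as in type $\rm{A}$. In the Auslander--Reiten quiver of $\dc_{n,t}$ displayed in Figure~\ref{figure: arcs type D}, these rigid indecomposables occupy precisely the region enclosed by the dashed blue lines. First I would read off the irreducible maps of $\dc_{n,t}$ whose source and target both lie in this rigid region. Such a map remains irreducible in the full additive subcategory $\rc_{\dc_{n,t}}$, and these maps already give all the non-loop arrows of the quiver $\qc_n$ in Figure~\ref{figure: quiver}.

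Next I would address the ``boundary'' rigid objects, namely the longest arcs $[i\,(i+n+1)]$ for $i = 1, \ldots, n+1$. In the ambient AR-quiver of $\dc_{n,t}$, the irreducible maps emanating from such a vertex have targets that are non-rigid objects corresponding to tagged arcs with an endpoint at the puncture (see Figure~\ref{figure: non-rigid type D}). Any morphism between two rigid objects that factors through this non-rigid strip can, in $\rc_{\dc_{n,t}}$, only be expressed via a single irreducible arrow. By direct Hom-hammock computation in $D^b(\field{\rm D}_{2t(n+1)})$, I would verify that this produces exactly one extra irreducible map at each longest-arc vertex, namely a loop from $[i\,(i+n+1)]$ to itself, yielding the loops depicted in $\qc_n$. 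This is the precise analogue of the loop at $(1\,n+2)$ in type $\rm{A}$.

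Finally, I would check that no further irreducible maps arise in $\rc_{\dc_{n,t}}$: any morphism between two rigid objects not accounted for by the above arrows must factor nontrivially through a composition of at least two of them. Again this is a Hom-hammock verification in the AR-quiver of $\dc_{n,t}$, using the explicit arc description and the action of $\tau$ and $\vph$ on tagged arcs.

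The main obstacle is step two, the identification of the loops. In type $\rm{A}$ the relevant computation crossed the fundamental domain along the bottom of Figure~\ref{figure: Ant}; in type $\rm{D}$ the path must traverse the strip of non-rigid objects corresponding to arcs incident to the puncture, and one must track carefully how $\vph$ (which swaps tags) intervenes with $\tau^{n+1}$ in the quotient $\db{D}{2t(n+1)}/\tau^{n+1}\vph^n$. Once this combinatorial bookkeeping is carried out, the resulting irreducible maps assemble into exactly the quiver $\qc_n$ of Figure~\ref{figure: quiver}.
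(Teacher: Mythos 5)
Your plan matches the paper's own proof: it keeps the irreducible maps of the ambient AR-quiver whose source and target are rigid, obtains the loops at the longest-arc vertices $[i\,(i+n+1)]$ from morphisms crossing the non-rigid strip (in the paper, the one-dimensional space of maps from $n$ to $y=(\tau^{n+1})^{1-2t}(n)$ factoring through the fork vertex $N-1$ but not through the $\tau$-orbit of $1,\dots,n-1$), and rules out further irreducible maps by Hom-hammock computations in $\db{D}{2t(n+1)}$ using the Serre-functor symmetry. The only cosmetic inaccuracy is your description of the immediate AR-neighbours of the longest arcs as puncture-incident tagged arcs (they are just longer, non-rigid arcs; the puncture-incident objects enter only as the factorization channel for the loop), but this does not affect the argument.
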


\begin{proof}

Consider the Auslander-Reiten quiver of $\dc_{n,t}$ depicted in
figure \ref{figure: arcs type D}.
Clearly, the
irreducible maps in $\ac_{n,t}$ with source and target in 
$\rc_{\dc_{n,t}}$, are also irreducible in $\rc_{\dc_{n,t}}$.  
To proceed, we will need some basic facts about Hom-hammocks in the derived category
$\db{D}{N}$, for $N$ even.
First note that, in the derived category $\db{D}{N}$, we have $\tau^{-N+1} = [1]$. Thus $\tau^{-N+2} = \tau [1]$
is a Serre functor in $\db{D}{N}$: For any $X,Y\in\db{D}{N}$, there are bi-natural isomorphisms
$\Hom_{\db{D}{N}}(X,Y) \simeq D\Hom_{\db{D}{N}}(Y,\tau^{-N+2}X)$. In particular, the Hom-hammock of any object
$X$ ends in $\tau^{-N+2}X$ and is symmetric with respect to the vertical line
(the blue line in figure~\ref{figure: Hom-hammocks}) going through $\tau^{-\frac{N}{2}+1}X$.
Without any computations, we thus obtain that the Hom-hammocks have the shape given in figure~\ref{figure: Hom-hammocks}, where
a part of the Hom-hammock of the indecomposable object denoted by $j$ is described.
The left-hand side of the figure is easily computed, since all meshes involved are commutative squares.
The rectangle on the left-hand side indicates some indecomposable objects $X$ such 
that $\dim \Hom (j,X) = 1$. Outside this rectangle, to its left and to its right, the zeros indicate 
that all morphisms from $j$ to some of the indecomposable objects in these regions are zero morphisms.
The star indicates a part of the Hom-hammock that we do not compute. The right-hand side of the figure
is deduced from the left-hand side by symmetry.
We have indicated some specific indecomposable objects in the figure. They are related by the following
equalities: $u = \tau^{-j+1}(1)$, $x = \tau^{-j+1}(N-j-1)$, $a=\tau^{-1}(x) = \tau^{-j}(N-j-1)$,
$b = \tau^{-N+2}(1)$, $c=\tau^{-N+2}(j)$ and $y = \tau^{-N+n+1}(n)$.

Using these Hom-hammocks, it is easy to verify that 
there is a non-zero map from $n$ to $y$, which
becomes an irreducible endomorphism in the category
$\rc_{\dc_{n,t}}$. For this,
note that there is a one-dimensional subspace of morphisms from $n$ to $y = \left(\tau^{n+1}\right)^{1-2t}(n)$
 (factoring through $N-1$) which do not factor through any indecomposable object in the $\tau$-orbit
 of $1,\ldots,n-1$. 
The same will obviously hold for the shifts of this
map. 

We claim that there are no other irreducible maps in 
$\rc_{\dc_{n,t}}$. This can be checked, using the 
Hom-hammocks of figure \ref{figure: quiver}.  
We leave the details to the reader, but 
point out the following useful fact.


Note that
the only indecomposable objects in the rectangles of figure~\ref{figure: Hom-hammocks}
that belong to the $\tau^{n+1}$ orbit of $1,2,\ldots,n$ are $1,2,\ldots,n$ and $y$. 
We claim that any morphism from some $j$, with $1\leq j\leq n$, to $y$ factors through $n$.
This holds since $\dim\Hom_{\db{D}{N}}(j,y) = 1$ and the composition $1 \fl 2 \fl \cdots \fl n \fl y$
 is non-zero (as can be seen from the case $j=1$ in figure~\ref{figure: Hom-hammocks}).
 
Hence the quiver $\qc_{\rc_{\dc_{n,t}}}$ is isomorphic to 
the quiver $\qc_n$ depicted in figure \ref{figure: quiver}.
\end{proof}

As for type $A$, we obtain the following as a special case of the computations necessary for the proof of
proposition \ref{proposition: max rigid D}.




\begin{center}
\begin{figure}
\begin{tikzpicture}[scale=0.4,
vertex/.style={fill=white, scale=0.85}]
\begin{scope}[y=-1cm]
\coordinate (1) at (0,0) ;
\coordinate (2) at (1,-1) ;
\coordinate (j) at (4,-4) ;
\coordinate (n) at (6,-6) ;
\coordinate (N-2) at (12,-12) ;
\coordinate (N-1) at (13,-13) ;
\coordinate (N) at (13,-12) ;
\coordinate (tau1) at (10,0) ;
\coordinate (x) at (16,-8) ;
\coordinate (y) at (20,-6) ;
\coordinate (u) at (8,0) ;
\coordinate (a) at (18,-8) ;
\coordinate (b) at (26,0) ;
\coordinate (c) at (30,-4) ;
\coordinate (d) at (22,-12) ;
\draw (1) --(2) --(j) ;
\begin{scope} [rotate=45]
\draw (j) rectangle (x) ;
\end{scope}
\draw (N-2) --(N-1) ;
\draw (N-2) --(N) ;
\draw (N) --(14,-12) ;
\draw (N-1) --(14,-12) ;
\draw (13,-11) --(14,-12) ;
\draw[loosely dashed] (14,-12) --(a) ;
\begin{scope} [rotate=45]
\draw (a) rectangle (c) ;
\end{scope}
\draw (20,-12) --(d) ;
\draw (20,-12) --(21,-11) ;
\draw (20,-12) --(21,-13) ;
\draw (21,-13) --(d) ;
\draw[loosely dotted] (j) --(c) ;
\draw[loosely dotted] (n) --(y) ;
\draw[blue] (17,0) -- (17,-14) ;
\draw (1) node {$\bullet$} node[above left] {$\scriptstyle{1}$} ;
\draw (2) node {$\bullet$} node[above left] {$\scriptstyle{2}$} ;
\draw (j) node[vertex] {$j$} ;
\draw (n) node[vertex] {$n$} ;
\draw (N-2) node {$\bullet$} node[above left] {$\scriptstyle{N-2}$} ;
\draw (N-1) node {$\bullet$} node[above left] {$\scriptstyle{N-1}$} ;
\draw (x) node[vertex] {$x$} ;
\draw (u) node[vertex] {$u$} ;
\draw (N) node {$\bullet$} ;
\draw (13,-11) node {$\bullet$} ;
\draw (14,-12) node {$\bullet$} ;
\draw (a) node[vertex] {$a$} ;
\draw (b) node[vertex] {$b$} ;
\draw (c) node[vertex] {$c$} ;
\draw (d) node {$\bullet$} ;
\draw (20,-12) node {$\bullet$} ;
\draw (21,-11) node {$\bullet$} ;
\draw (21,-13) node {$\bullet$} ;
\draw (21,-12) node {$\bullet$} ;
\draw (y) node[vertex] {y} ;
\draw (4,-1) node {\huge{0}} ;
\draw (4,-9) node {\huge{0}} ;
\draw (10,-6) node {\huge{1}} ;
\draw (14,-3) node {\huge{0}} ;
\draw (15.5,-12) node[scale=2] {$\ast$} ;
\draw (18.5,-12) node[scale=2] {$\ast$} ;
\draw (20,-3) node {\huge{0}} ;
\draw (24,-6) node {\huge{1}} ;
\draw (30,-1) node {\huge{0}} ;
\draw (30,-9) node {\huge{0}} ;
\end{scope}
\end{tikzpicture}
\caption{Hom-hammocks in the derived category $\db{D}{N}$, $N$ even.}
\label{figure: Hom-hammocks}
\end{figure}
\end{center}

\begin{corollary}\label{endo:D}
Let $n,t\in\nb$ and let $T$ be the maximal rigid object of the orbit category $\db{D}{2t(n+1)}/\tau^{n+1}\vph^n$
corresponding to the collection of arcs generated by
$[1\;3],[1\;4],\ldots,[1\;n]$ (see Lemma~\ref{lemma: rigid D}). Then the
endomorphism algebra of $T$ is given by the quiver
\[
\xymatrix{ 1 \dr & 2 \dr & 3 \ar@{..}[r] & n-1 \dr & n \ar@(ur,dr)^{\al} },
\]
with ideal of relations generated by $\al^2$.
\end{corollary}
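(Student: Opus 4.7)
The plan is to read off the endomorphism algebra directly from the structure of $\qc_{\rc_{\dc_{n,t}}} \simeq \qc_n$ established in Proposition~\ref{proposition: max rigid D}, supplemented by a short dimension count.

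First I would decompose $T = T_1 \oplus T_2 \oplus \cdots \oplus T_n$, where $T_i$ corresponds, via Lemma~\ref{lemma: rigid D}, to the arc $[1\;(i+2)]$, and identify each $T_i$ with the indecomposable labelled $i$ in figure~\ref{figure: Hom-hammocks}. The objects $T_1,\ldots,T_n$ then form a $\tau$-chain in the Auslander--Reiten quiver of $\db{D}{N}$ (with $N=2t(n+1)$), and the mesh morphisms descend to irreducible arrows $T_i \to T_{i+1}$ in $\dc_{n,t}$. Restricting the quiver $\qc_n$ to the vertex set $\{T_1,\ldots,T_n\}$ produces exactly the stated linear chain $T_1 \to T_2 \to \cdots \to T_n$ together with a single loop $\alpha$ at $T_n$, where $\alpha$ is the image in the orbit category of the irreducible map $n \to y = \tau^{-(N-n-1)}(n)$ constructed in the proof of Proposition~\ref{proposition: max rigid D}.

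It remains to check that the only relation is $\alpha^2 = 0$. The central step is the dimension count $\dim_{\field}\End_{\dc_{n,t}}(T_n) = 2$. By the $\Hom$-hammock picture of figure~\ref{figure: Hom-hammocks}, we have $\dim_{\field}\Hom_{\db{D}{N}}(n,y) = 1$, and a direct inspection of the hammock shows that no further representative in the $\langle \tau^{n+1}\vph^n\rangle$-orbit of $n$ contributes a non-zero morphism to $n$. Hence $\End_{\dc_{n,t}}(T_n)$ has basis $\{\id_{T_n}, \alpha\}$. Since $T_n$ is indecomposable, this endomorphism algebra is local with nilpotent radical spanned by $\alpha$, forcing $\alpha^2 \in \operatorname{rad}^2 = 0$. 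The absence of further relations follows from the $\Hom$-hammock computations already used in the proof of Proposition~\ref{proposition: max rigid D}: every morphism $T_j \to y$ (for $1 \le j \le n$) factors through $T_n$, giving $\dim_{\field}\Hom(T_j,T_n) = 2$ with basis the linear composition and its postcomposition with $\alpha$, while $\dim_{\field}\Hom(T_j,T_k) = 1$ for $j \le k < n$. These dimensions coincide with those of the path algebra of the stated quiver modulo $\alpha^2$, so the natural surjection from that path algebra onto $\End_{\dc_{n,t}}(T)^{\mathrm{op}}$ is an isomorphism.

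The main obstacle is the dimension count $\dim_{\field}\End_{\dc_{n,t}}(T_n) = 2$: one must rule out hidden contributions from other objects in the $\langle\tau^{n+1}\vph^n\rangle$-orbit of $n$ that might fall inside the $\Hom$-hammock of $n$. This reduces to a finite bookkeeping check on figure~\ref{figure: Hom-hammocks}, entirely parallel to the verification already carried out (implicitly) in the proof of Proposition~\ref{proposition: max rigid D}.
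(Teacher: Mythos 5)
Your proposal is correct and follows essentially the same route as the paper: the paper's proof simply remarks that the Gabriel quiver computation is contained in the proof of Proposition~\ref{proposition: max rigid D} and that the relation $\al^2$ is easy to verify, and your argument is a fleshed-out version of exactly that, reading the arrows off $\qc_n$ and confirming the relations by the Hom-hammock dimension counts (in particular $\dim\End_{\dc_{n,t}}(T_n)=2$ and the factorisation of every map $T_j\to y$ through $T_n$) already used there.
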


\begin{proof}
The computation of the Gabriel quiver is
essentially included in the proof 
of Proposition \ref{proposition: max rigid D}. It is
easy to verify that the only relation is 
$\al^2$.
\end{proof}

Let $\Lambda_n$ denote the algebra appearing in 
Corollaries 
\ref{corollary: max rigid A} and \ref{endo:D}. We will need some 
properties of the module category $\module \Lambda_n$.
Recall that a module $M$ is called $\tau$-rigid if $\Hom(M, \tau M) =0$, see
\cite{air}.
Now let $\rc_{n}$ denote the full additive subcategory generated
by the indecomposable $\tau$-rigid modules in $\module \Lambda_n$.
It follows from Proposition \ref{proposition: list CTO}, with $t=1$, that
in particular $\Lambda_n$ is a 2-CY-tilted algebra, and so
by \cite{air}, a module is $\tau$-rigid if and only if it is of the form
$\Hom_{\C}(T,X)$, where $X$ is a rigid object in $\C= \db{A}{3n}/\tau^n[1]$.

It is easy to check that the quiver $\qc_{\rc_n}$ can be 
obtained by deleting the vertices labeled by $(n+1) (n+3), \dots ,
(n+1) (2n+2)$ in the quiver $\qc_n$ of figure \ref{figure: quiver}.

\subsubsection{Type $\rm{E}$}\label{subsubsection: max rigid E}

In this section we investigate the rigid (and maximal rigid) objects
in the orbit categories $\db{E}{7}/\tau^2$ and $\db{E}{7}/\tau^5$,
appearing in Proposition~\ref{proposition: list max rigid}.
There is also a geometric machinery available in type
 $\rm{E}$, see \cite{la}. However, our description instead relies on 
simple brute force computations, and we leave out almost all details.

For type  $\db{E}{7}/\tau^5$, the Auslander-Reiten quiver is given in Figure
\ref{figure: e7/5}.
There are 5 indecomposable rigid objects, all in the bottom $\tau$-orbit
in the figure. Let $x$ be any of these five. Then $x \oplus \tau^2 x$
is maximal rigid, and all maximal rigids are obtained this way.
In particular, they all have the same endomorphism ring.
  
\begin{proposition}\label{e7-alg}
The endomorphism algebra of any maximal rigid object in the orbit category
$\db{E}{7}/\tau^5$ is isomorphic to the path algebra of the quiver:
\[\xymatrix{\bullet \ar@(ul,dl)_{\alpha} \ar[r]^{\beta} & \bullet \ar@(ur,dr)^{\gamma}},\]
with ideal of relations generated by $\beta \al - \gamma \beta$, $\al^2$, $\gamma^2$.
\end{proposition}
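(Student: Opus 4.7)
The plan is to compute $\End_\cat(T)$ directly for $\cat = \db{E}{7}/\tau^5$ and $T = x \oplus \tau^2 x$, where $x$ is a fixed indecomposable rigid in the bottom $\tau$-orbit of Figure~\ref{figure: e7/5}. Since every maximal rigid object is of the form $\tau^k(x\oplus \tau^2 x)$ for some $k$, and $\tau$ is an autoequivalence of $\cat$, all maximal rigids have isomorphic endomorphism algebras, so this single computation suffices. Using the natural covering functor $\db{E}{7}\to \cat$, morphism spaces in $\cat$ unfold as
$$\Hom_\cat(Y,Z) = \bigoplus_{n\in\zb}\Hom_{\db{E}{7}}(Y,\tau^{5n}Z),$$
a finite sum since $\db{E}{7}$ has only finitely many indecomposables.

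First I would locate $x$ and $\tau^2 x$ in the AR quiver of $\db{E}{7}$ using Figure~\ref{figure: e7/5}, and then compute Hom-hammocks from $x$ and from $\tau^2 x$ to every $\tau^{5n}$-shift of $x$ and $\tau^2 x$ inside a fundamental domain. The goal is to verify the dimensions
$$\dim\Hom_\cat(x,x)=\dim\Hom_\cat(\tau^2x,\tau^2 x)=\dim\Hom_\cat(x,\tau^2 x)=2,\qquad \Hom_\cat(\tau^2 x,x)=0,$$
so that $\dim\End_\cat(T)=6$, matching the dimension of the path algebra described in the statement.

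From these Hom-spaces one reads off the Gabriel quiver: a loop $\al$ at $x$ and a loop $\gamma$ at $\tau^2 x$ representing the unique non-zero radical endomorphism of each summand, together with an arrow $\beta\colon x\to \tau^2 x$ spanning the radical of $\Hom_\cat(x,\tau^2 x)$ modulo its square. The relations $\al^2=0$ and $\gamma^2=0$ are forced by the two-dimensionality of the corresponding endomorphism algebras. The commutation relation $\beta\al=\gamma\beta$ comes from a mesh relation in $\db{E}{7}$ that survives under the covering; equivalently, both sides represent, up to a non-zero scalar, the unique non-zero element in the second power of the radical of $\Hom_\cat(x,\tau^2 x)$.

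The main obstacle, which the authors themselves flag as brute force, is the explicit determination of the relevant Hom-hammocks in $\db{E}{7}$ and their translates under $\tau^5$. In practice, the work is cut down by using $\tau$-invariance to identify $\End_\cat(x)\simeq \End_\cat(\tau^2 x)$, by exploiting the symmetry of the AR quiver of $\db{E}{7}$ to restrict attention to a single fundamental region of the derived category, and by observing that only finitely many $\tau^{5n}$-shifts contribute non-trivially to each Hom-space.
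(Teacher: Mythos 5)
Your proposal is correct and follows essentially the same route as the paper: the authors also identify the maximal rigid objects as $x\oplus\tau^2x$ with $x$ ranging over the five indecomposable rigids in the bottom $\tau$-orbit (hence all conjugate under $\tau$), and then determine the endomorphism algebra by brute-force computation of Hom-spaces via the covering $\db{E}{7}\to\db{E}{7}/\tau^5$, leaving out the details exactly as you outline them. Your dimension count ($2+2+2+0=6$) and the identification of $\beta\al$ and $\gamma\beta$ as spanning the one-dimensional square of the radical of $\Hom(x,\tau^2x)$ are the right checkpoints for that computation.
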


\begin{remark}
This latter 2-endorigid algebra is shown not to be 
$2$-CY-tilted in section~\ref{subsection: not 2CY-tilted}.
\end{remark}

\begin{center}
\begin{figure}
\begin{tikzpicture}[scale=0.65,
fl/.style={->,shorten <=6pt, shorten >=6pt,>=latex}]

\foreach \x in {0,1,...,9} {
\draw (2*\x + 1, 2.2)  node[scale=0.5] {$\bullet$};
\draw[fl] (2*\x, 2) -- (2*\x + 1,2.2) ;
\draw[fl] (2*\x+1, 2.2) -- (2*\x + 2,2) ;
\foreach \y in {0,1,2} { 
\draw (2*\x +1, 2*\y +1)  node[scale=0.5] {$\bullet$};
\draw[fl] (2*\x,2*\y) -- (2*\x + 1,2*\y+1) ;
\draw[fl] (2*\x +1,2*\y +1) -- (2*\x+2,2*\y) ;
};
};

\foreach \x in {0,1,...,9} {
\foreach \y in {0,1} { 
\draw (2*\x, 2*\y+ 2)  node[scale=0.5] {$\bullet$};
\draw[fl] (2*\x,2*\y+2) -- (2*\x + 1,2*\y+1) ;
\draw[fl] (2*\x+1,2*\y+1) -- (2*\x + 2,2*\y+2) ;
};
};

\foreach \x in {0,1} {
\draw (10*\x, 0)  node[scale=0.7] {$a$};
\draw (10*\x+2, 0)  node[scale=0.7] {$b$};
\draw (10*\x+4, 0)  node[scale=0.7] {$c$};
\draw (10*\x+6, 0)  node[scale=0.7] {$d$};
\draw (10*\x+8, 0)  node[scale=0.7] {$e$};
};
\begin{scope}[xshift=0.0cm, yshift=0.0cm, rotate=0, xscale=-1]
\draw[thick, dashed, blue] (0.6,-0.3) -- ++ (-8.7,0) -- ++(-3.5, 2.8) -- ++
(-2.2, 2.9)
-- ++(8.7,0) -- cycle ;
\end{scope}

\end{tikzpicture}
\caption{The orbit category $\db{E}{7}/\tau^5$.}
\label{figure: e7/5}
\end{figure}
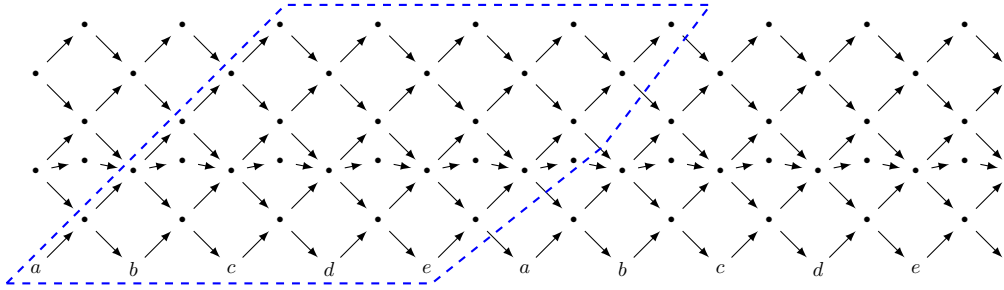
\end{center}

Let us now consider $\db{E}{7}/\tau^2$. 
Its Auslander-Reiten quiver is given in figure
\ref{figure: e7/2}.
There are only two indecomposable rigid objects, both in the top $\tau$-orbit
in the figure.

\begin{center}
\begin{figure}
\begin{tikzpicture}[scale=0.65,
fl/.style={->,shorten <=6pt, shorten >=6pt,>=latex}]

\foreach \x in {0,1,...,9} {
\draw (2*\x + 1, 2.2) node[scale=0.5] {$\bullet$};
\draw[fl] (2*\x, 2) -- (2*\x + 1,2.2) ;
\draw[fl] (2*\x+1, 2.2) -- (2*\x + 2,2) ;
\foreach \y in {0,1,2} { 
\draw[fl] (2*\x,2*\y) -- (2*\x + 1,2*\y+1) ;
\draw[fl] (2*\x +1,2*\y +1) -- (2*\x+2,2*\y) ;
};
};

\foreach \x in {0,1,...,9} {                          
\foreach \y in {0,1} {                                
\draw (2*\x +1, 2*\y +1) node[scale=0.5] {$\bullet$}; 
};
};

\foreach \x in {0,1,...,9} {
\foreach \y in {0,1} { 
\draw (2*\x, 2*\y+ 2) node[scale=0.5] {$\bullet$};
\draw[fl] (2*\x,2*\y+2) -- (2*\x + 1,2*\y+1) ;
\draw[fl] (2*\x+1,2*\y+1) -- (2*\x + 2,2*\y+2) ;
};
};

\foreach \x in {0,1,...,4} {
\draw (4*\x+1, 5)  node[scale=0.7] {$a$};           
\draw (4*\x+3, 5)  node[scale=0.7] {$b$};           
};

\foreach \x in {0,1,...,4} {                         
\draw (4*\x, 0)  node[scale=0.5] {$\bullet$};      
\draw (4*\x+2, 0)  node[scale=0.5] {$\bullet$};      
};
\begin{scope}[xshift=0.0cm, yshift=0.0cm, rotate=0, xscale=-1]
\draw[thick, dashed, blue] (0.6,-0.3) -- ++ (-2.7,0) -- ++(-3.5, 2.8) -- ++
(-2.2, 2.9)
-- ++(2.9,0) -- cycle ;
\end{scope}
\begin{scope}[xshift=4.05cm, yshift=0.0cm, rotate=0, xscale=-1]
\draw[thick, dashed, blue] (0.6,-0.3) -- ++ (-2.7,0) -- ++(-3.5, 2.8) -- ++
(-2.2, 2.9)
-- ++(2.9,0) -- cycle ;
\end{scope}
\end{tikzpicture}
\caption{The orbit category $\db{E}{7}/\tau^2$.}
\label{figure: e7/2}
\end{figure}
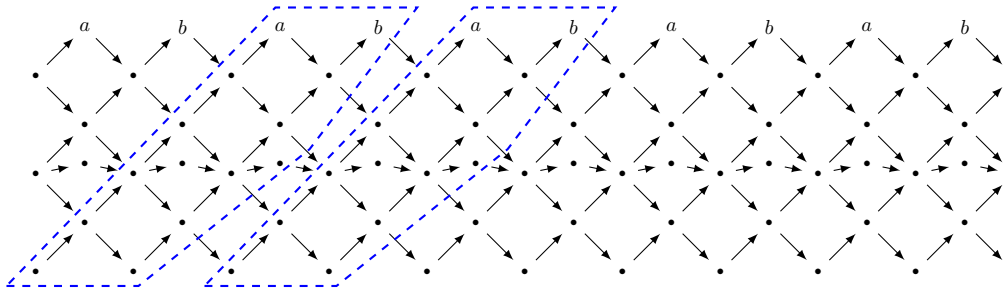
\end{center}

Now the full subcategory  $\rc_{\db{E}{7}/\tau^2}$
generated by the rigids only contains two indecomposable
objects with no maps between them.
In particular, we have the following. 

\begin{proposition}\label{prop:e7/2}
Any maximal rigid object in the orbit category $\db{E}{7}/\tau ^2$ is indecomposable
and its endomorphism algebra is given by a loop $\al$ with relation $\al^3$.
\end{proposition}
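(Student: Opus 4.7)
The plan is to exploit the finiteness of the AR-quiver of $\db{E}{7}/\tau^2$ (figure~\ref{figure: e7/2}) together with the description of its Hom-spaces via the mesh relations, valid since the category is standard.

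First, I would observe from the AR-quiver that the two indecomposable rigid objects $a$ and $b$ in the top $\tau$-orbit are exchanged by the shift $\shift$ of the orbit category, so that $\End(a) \cong \End(b)$ as $\field$-algebras. Consequently it is enough to compute $\End(a)$ and to check that $a \oplus b$ fails to be rigid; any basic maximal rigid object must then be indecomposable.

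Next, applying the orbit category formula
\[
\Ext^1_{\db{E}{7}/\tau^2}(a,b) \;=\; \bigoplus_{n \in \zb} \Hom_{\db{E}{7}}(a,\tau^{2n} b[1]),
\]
I would inspect the Hom-hammock of $a$ in the derived category $\db{E}{7}$ (a finite region, since $a$ sits inside a Dynkin AR-quiver) and read off a nonzero contribution from at least one value of $n$. This yields $\Ext^1(a,b)\neq 0$, so $a\oplus b$ is not rigid. Combined with the fact that $a$ and $b$ are the only indecomposable rigid objects, this forces every basic maximal rigid object to be indecomposable --- either $a$ or $b$.

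The same orbit-category formula gives $\End(a) = \bigoplus_{n} \Hom_{\db{E}{7}}(a,\tau^{2n} a)$. A finite Hom-hammock calculation shows this algebra is three-dimensional, with basis $\{1,\alpha,\alpha^2\}$, where $\alpha$ comes from the unique (up to scalar) mesh-path from $a$ to its first $\tau^{2}$-translate lying in the Hom-hammock. Since no further $\tau^{2n}$-translate of $a$ (for $|n|\ge 2$) lies inside the hammock, all threefold compositions vanish, yielding $\alpha^3 = 0$ while $\alpha^2 \neq 0$. Since $a$ is indecomposable the algebra $\End(a)$ is local, and hence has Gabriel quiver the single vertex carrying the loop $\alpha$ with relation $\alpha^3$.

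The main obstacle is bookkeeping in the Hom-hammock calculation in $\db{E}{7}$: one must pin down exactly which $\tau^{2n}$-iterates of $a$ (and of $b[1]$) fall inside the finite support of $\Hom(a,-)$ in the derived category, and verify both the nonvanishing of $\Ext^1(a,b)$ and the precise composition pattern forcing $\alpha^3 = 0$ rather than $\alpha^2 = 0$ or $\alpha^4 = 0$. As the authors remark, this is essentially brute force; the standardness of the category lets us bypass any deformation-theoretic input by reducing every such computation to mesh relations in $\zb\rm{E}_7$.
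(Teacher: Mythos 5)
Your outline is correct and is essentially the paper's route: the paper gives no written proof for this proposition, deferring to ``simple brute force computations'' in the standard (mesh) category, and your plan of reading everything off the orbit-category Hom formula $\Hom_{\db{E}{7}/\tau^2}(X,Y)=\bigoplus_n\Hom_{\db{E}{7}}(X,\tau^{2n}Y)$ together with Hom-hammocks in $\db{E}{7}$ is exactly how the authors carry out the analogous computations in types $\rm{A}$ and $\rm{D}$. Two small points. First, once you have observed that $\shift$ exchanges $a$ and $b$ (which is forced: $\shift$ permutes the two rigid indecomposables and cannot fix either, since $\shift x\simeq x$ would give $\Ext^1(x,x)\simeq\End(x)\neq 0$), the non-rigidity of $a\oplus b$ is automatic, as $\Ext^1(a,b)=\Hom(a,\shift b)=\End(a)\neq 0$; no hammock computation is needed for that step. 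Second, there is an off-by-one slip in your description of $\End(a)$: since $\alpha\in\Hom_{\db{E}{7}}(a,\tau^{-2}a)$, the power $\alpha^2$ lives in the degree $n=-2$ component $\Hom_{\db{E}{7}}(a,\tau^{-4}a)$, so your assertion that no $\tau^{2n}$-translate with $|n|\geq 2$ meets the hammock would force $\alpha^2=0$, contradicting your own claim that the algebra is three-dimensional with basis $\{1,\alpha,\alpha^2\}$. The correct bookkeeping is that $\tau^{2n}a$ meets the Hom-hammock of $a$ precisely for $n\in\{0,-1,-2\}$, each contributing a one-dimensional space, with the composite $a\to\tau^{-2}a\to\tau^{-4}a$ nonzero; this is what yields $\alpha^2\neq 0=\alpha^3$.
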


We can compare this to the case $\db{D}{4}/\tau \vph$,
which appears in Proposition
\ref{proposition: list CTO}.
The AR-quiver of $\db{D}{4}/\tau \vph$ is given by
\[\xymatrix@-1pc{
& b \bdr & &  c \bdr & & b \\
a \dr\hdr\bdr & c \dr &  \shift a \dr\hdr\bdr &  \shift b \dr & a \dr\hdr\bdr & c \\
& d \hdr & &  d \hdr & & d
}\]
and the only indecomposable rigid objects are $b$ and $c$.
So, we also have that $\rc_{\db{D}{4}/\tau^2}$
contains exactly two indecomposable objects, 
with no maps between them. Moreover it is easily verified
that each of these indecomposables are maximal rigid,
and that the endomorphism rings are the same as in 
Proposition \ref{prop:e7/2}.

\subsection{Tables}\label{subsection: tables}

In a first table, we summarize some known results on orbit categories with cluster tilting objects,
which can be found in \cite{a,bikr,bo}.
A second table summarizes results from \cite{a,bikr} and from the current section. For each orbit category,
we give the number of isomorphism classes of indecomposable objects, the number of summands
of any basic maximal rigid object (or equivalently, the rank of the Grothendieck group of its
endomorphism algebra), the number of isomorphism classes of
indecomposable rigid objects,
and the quiver with relations of the endomorphism algebra of some maximal rigid object.
Recall that $\vph$ denotes an automorphism of the derived category of type ${\rm D}$ induced
by an automorphism of order two of a Dynkin diagram of type ${\rm D}$.

\begin{remark}
In the second row of Table 1, the following conventions are used:
\begin{itemize}
 \item If $n=1$, then $a=0$ and $b=0$;
 \item  if $k=2$, then there is no loop $\al$, and in the relations, $\al$ should be replaced by $ab$.
\end{itemize}
\end{remark}

\begin{remark}
Let $\cat$ be the orbit category appearing in the last row of the first table.
Because of the shape of the quiver in the last column, one might be tempted to think that $\cat$
should categorify a cluster algebra of type $\rm{F}_4$. However, $\cat$ has 24 indecomposable rigid objects only,
while there are 28 almost positive roots in type $\rm{F}_4$.

\end{remark}

\begin{landscape}\thispagestyle{empty}
\begin{figure}
\begin{tabular}{|c|c|c|c|c|c|}
\multicolumn{6}{c}{Table 1: Orbit categories with cluster tilting objects, which are not acyclic cluster categories.}\\
\multicolumn{6}{c}{}\\
\hline
\text{Orbit category}     &
\text{Indecomposables}    &
\text{Rank}               &
\text{Indec. rigids}       &
\text{Quiver}             &
\text{Relations}          \\
\hline
$\db{A}{3n}/\tau^n[1]
^{\phantom{\text{\huge{A}}}}
_{\phantom{\text{\huge{A}}}}$            &
$\frac{3n(n+1)}{2}$                      &
$n$                                      &
$n(n+1)$                                 &
$\xymatrix@C=1em@R=1em{ 1 \dr
& 2 \dr & 3 \ar@{..}[r]
& n-1 \dr & n \ar@(ur,dr)^{\al} }$       &
$\al^2$                                  \\
\hline
$\db{D}{kn}/\tau^n\vph^n$, $kn\geq 4$, $k>1^{\phantom{\text{\huge{A}}}}$                   &
$kn^2$                                          &
$n$                                             &
$n(n+1)$                                     &
$\xymatrix@C=1em@R=1em{ 1 \dr & 2 \dr & 3
\ar@{..}[r] & n-1 \dr^a & n \ar@/^/[l]^b
\ar@(ur,dr)^{\al}}$                             &
$\al^{k-1}-ab$, $\al a$, $b\al$                 \\
\hline
$\db{E}{8}/\tau^4
\phantom{\text{\huge{A}}}
_{\phantom{\text{\huge{A}}}}$  &
32                             &
2                              &
8                              &
$\xymatrix@C=1em@R=1em{
1 \dr &
2 \ar@(ur,dr)^{\al}}$          &
$\al^3$                        \\
\hline
$\db{E}{8}/\tau^8
\phantom{\text{\huge{A}}}
_{\phantom{\text{\huge{A}}}}$     &
64                                &
4                                 &
24                                &
$\xymatrix@C=1em@R=1em{
1 \dr & 2 \dr^a &
3 \ar@/^/[l]^b \dr & 4}$          &
$aba$, $bab$                      \\
\hline
\end{tabular}

\vspace{2cm}

\begin{tabular}{|c|c|c|c|c|c|}
\multicolumn{6}{c}{Table 2: Orbit categories with non-cluster tilting, maximal rigid objects.}\\
\multicolumn{6}{c}{}\\
\hline
\text{Orbit category}     &
\text{Indecomposables}    &
\text{Rank}               &
\text{Indec. rigids}       &
\text{Quiver}             &
\text{Relations}          \\
\hline
$\db{A}{(2t+1)(n+1)-3}/\tau^{k(n+1)-1}[1]\phantom{^\text{\huge{A}}}$ &
$\scriptstyle{\frac{1}{2}[(2t+1)(n+1)-3](n+1)}$           &
$n$                                                       &
$n(n+1)$                                                  &
$\xymatrix@C=1em@R=1em{ 1 \dr & 2 \dr & 3 \ar@{..}[r]
& n-1 \dr & n \ar@(ur,dr)^{\al} }$                        &
$\al^2$                                                   \\
$t>1$ &
& & & & \\
\hline
$\db{D}{2t(n+1)}/\tau^{n+1}\vph^n
\phantom{\text{\huge{A}}}
_{\phantom{\text{\huge{A}}}}$            &
$2t(n+1)^2$                              &
$n$                                      &
$n(n+1)$                                 &
$\xymatrix@C=1em@R=1em{ 1 \dr
& 2 \dr & 3 \ar@{..}[r]
& n-1 \dr & n \ar@(ur,dr)^{\al} }$       &
$\al^2$                                  \\
\hline
$\db{E}{7}/\tau^2
\phantom{\text{\huge{A}}}
_{\phantom{\text{\huge{A}}}}$       &
14                                  &
1                                   &
2                                   &
$\xymatrix{1
\ar@(ur,dr)^{\al}}$                 &
$\al^3$                             \\
\hline
$\db{E}{7}/\tau^5
\phantom{\text{\huge{A}}}
_{\phantom{\text{\huge{A}}}}$    &
35                               &
2                                &
5                                &
$\xymatrix@C=1em@R=1em{1
\ar@(ul,dl)_{\alpha}
\ar[r]^{\beta} & 2
\ar@(ur,dr)^{\gamma}}$           &
$\beta \al - \gamma \beta$,
$\al^2$, $\gamma^2$              \\
\hline
\end{tabular}
\end{figure}

\end{landscape}

\section{Comparing subcategories generated by rigid objects}\label{section: comparisons}

Our aim, in this section, is to compare the full subcategories of rigid objects of the triangulated
categories listed in Table 2.
In order to do so, we will follow  a strategy we now describe: Let $\cat$ and $\dc$
be $\field$-linear, Krull--Schmidt, Hom-finite, 2-Calabi--Yau, triangulated categories. We assume that $T\in\cat$ is a cluster tilting object
and $U\in\dc$ a maximal rigid object.
Let $\rc_\cat$, resp. $\rc_\dc$, be the full subcategory of $\cat$,
resp. $\dc$, generated by the rigid objects.
Let $\qcc$ be a quiver whose vertices are the (isoclasses of) indecomposable rigid objects of $\cat$,
and whose arrows form a basis for the irreducible morphisms in $\rc_\cat$. Define
$\qcd$ similarly. Finally, let $\qc^{\tau-\text{rig}}_\cat$ be the quiver similarly given
by the irreducible morphisms of the image of $\cat(T,-)|_{\rc_\cat}$ in $\module\End_\cat(T)$.
Define $\qc^{\tau-\text{rig}}_\dc$ similarly.

\vspace{5pt}
Assume that the following hold:
\begin{itemize}
 \item[(a)] The indecomposable rigid objects of $\cat$ are all shifts of
indecomposable summands of $T$; and similarly for $\dc$.
 \item[(b)] There is some isomorphism of quivers $\sigma: \qcc \fl \qcd$
satisfying the following properties:
\begin{itemize}
 \item[(b1)] The map $\sigma$ commutes with shifts on objects and on irreducible morphisms;
 \item[(b2)] It sends $T$ to $U$;
 \item[(b3)] It induces an isomorphism between $\End_\cat(T)$ and $\End_\dc(U)$.
\end{itemize}
 \item[(c)] The finite dimensional algebra $\End_\cat(T)$ is generalised standard, i.e. the morphisms in the module category are given by linear combinations of paths in its Auslander--Reiten quiver \cite{sko}.
 \item[(d)] The quiver $\qc^{\tau-\text{rig}}_\cat$ is isomorphic to the full subquiver
 of $\qcc$ whose vertices are not in $\add \shift T$; and similarly for $\dc$.
\end{itemize}

\begin{lemma}\label{lemma: comparing subcategories of rigids}
 Under the assumptions listed above, any morphism in $\rc_\cat$ is
a linear combination of paths in $\qcc$ and $\sigma$ induces an equivalence of categories
$\rc_\cat\fl\rc_\dc$.
\end{lemma}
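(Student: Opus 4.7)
The plan is to first prove that every morphism in $\rc_\cat$ is a linear combination of paths in $\qcc$, and then to use $\sigma$ to construct the desired equivalence. For the path description, I would exploit the background equivalence $\cat/\add T\simeq\module\End_\cat(T)$ available because $T$ is cluster tilting. By (a), every indecomposable rigid in $\cat$ has the form $\shift^k T_i$, so shift-equivariance of $\rc_\cat$ reduces the task to describing $\Hom_\cat(T_i, Y)$ for $Y$ an indecomposable rigid. I would split this space into morphisms factoring through $\add T$ and morphisms modulo those: the latter correspond, via the equivalence, to morphisms from the indecomposable projective at $i$ to the tau-rigid module $\Hom_\cat(T,Y)$, which by (c) are linear combinations of paths in the AR-quiver of $\module\End_\cat(T)$, and by (d) lift to paths in $\qcc$ avoiding $\add\shift T$; the former decompose as compositions of morphisms in $\End_\cat(T)$ (themselves linear combinations of paths in the Gabriel quiver of $\End_\cat(T)$, which sits inside $\qcc$ as the $T$-subquiver) with morphisms landing in $Y$.

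Next, I would define $F\colon\rc_\cat\to\rc_\dc$ by setting $F(\shift^k T_i)=\shift^k U_i$ (made possible by (b1) and (b2)) and by applying $\sigma$ to arrows, then extending linearly via the first step. Well-definedness reduces to checking that relations among paths in $\rc_\cat$ are mapped to relations in $\rc_\dc$. Relations supported entirely on the $T$-subquiver are relations in $\End_\cat(T)$ and transport to $\End_\dc(U)$ by (b3); relations mixing $T$- and non-$T$-vertices are dictated by the AR-mesh structure in $\module\End_\cat(T)$, but are encoded intrinsically in $\qcc$ together with its shift action, so (b1) ensures they match the corresponding relations in $\qcd$. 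Essential surjectivity of $F$ follows immediately from (a) applied to $\dc$ and from the fact that $\sigma$ is a bijection on vertices.

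The main obstacle will be establishing full faithfulness, since $U$ is only maximal rigid and one lacks an equivalence $\dc/\add U\simeq\module\End_\dc(U)$ with which to run the first step verbatim in $\dc$. I would circumvent this by running the analogous argument on the $\dc$-side: by (b3), generalised standardness transports to $\End_\dc(U)$, and (a), (d) for $\dc$ together with standardness of the triangulated orbit category $\dc$ allow morphisms in $\rc_\dc$ to be written as linear combinations of paths in $\qcd$ with mesh-type relations identifiable intrinsically from $\qcd$. Matching path spaces and their relations via $\sigma$ then shows that $F$ is an isomorphism on Hom-spaces, completing the proof.
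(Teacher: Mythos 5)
Your overall architecture matches the paper's: apply $\cat(T,-)$, use generalised standardness of $\End_\cat(T)$ together with assumption (d) to write morphisms as linear combinations of paths, and transport along $\sigma$. But there is a genuine gap where you justify well-definedness and full faithfulness. The mechanism the paper relies on, and which is absent from your argument, is an elementary consequence of rigidity: since $\Hom_\cat(T,\shift T)=0$, no nonzero morphism with domain in $\add T$ lies in the ideal $(\shift T)$, so $\cat(T,-)$ is \emph{faithful} on morphisms out of $\add T$ (and likewise $\dc(U,-)$ on morphisms out of $\add U$, because $U$ is rigid even though not cluster tilting). This single observation is what (i) upgrades ``$\shift^{-a}f$ and $g$ agree modulo $(\shift T)$'' to ``$\shift^{-a}f=g$'' in the path description, (ii) lets one test whether a linear combination of paths vanishes in $\rc_\cat$ or in $\rc_\dc$ by testing its image in $\module\End_\cat(T)$, respectively $\module\End_\dc(U)$, where (b3), (c) and (d) identify the two sides, and (iii) gives fullness by the same token. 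In its place you appeal to relations being ``mesh-type'' and ``encoded intrinsically in $\qcc$ together with its shift action''; but $\rc_\cat$ is not assumed to be a mesh category, and the assertion that the relations are determined by the quiver-with-shift data is exactly what has to be proved, so this step begs the question.

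Your proposed remedy on the $\dc$-side, invoking ``standardness of the triangulated orbit category $\dc$,'' is not available: standardness of $\dc$ is not among the hypotheses (a)--(d), and the paper explicitly states later that it does not know whether these categories are standard. It is also unnecessary: faithfulness of $\dc(U,-)$ on morphisms out of $\add U$ needs only that $U$ be rigid, not cluster tilting, so the obstacle you identify dissolves once the observation above is in hand. Two smaller slips: the equivalence from the background section is a quotient by the ideal generated by the \emph{shift} of $T$, so your splitting of $\Hom_\cat(T_i,Y)$ is along the wrong ideal (and is in any case superfluous once faithfulness is known); and the Gabriel quiver of $\End_\cat(T)$ need not embed in $\qcc$ as a ``$T$-subquiver,'' since a morphism irreducible in $\add T$ may factor through a shifted summand inside the larger category $\rc_\cat$.
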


\begin{proof}
Assume that $T=T_1\oplus\cdots\oplus T_n$ is basic, and $T_i$ is indecomposable for each $i$.


We prove the statement in three steps:
\begin{enumerate}
 \item  Any morphism in $\rc_\cat$ (resp. $\rc_\dc$) is
a linear combination of paths in $\qcc$ (resp. $\qcd$).
 \item The morphism $\sigma$ induces a well-defined functor $\rc_\cat \fl \rc_\dc$,
which is faithful.
 \item The induced functor is dense
and full.
\end{enumerate}

(1) Let $f$ be a morphism in $\rc_\cat$. By assumption (a), we may assume that it is of the form
$\shift^a T_i \fl \shift^b T_j$ for some $a,b\in\zb$, and $i,j\in\{1,\ldots,n\}$.
By assumption (c), the morphism $\cat(T,\shift^{-a}f)$ is a linear combination of paths in $\qcc^{\tau-\text{rig}}$.
Let $g\in\rc_\cat$ be the corresponding linear combination of paths in $\qcc$. Such a morphism exists by assumption (d).
We then have $\cat(T, \shift^{-a}f-g) = 0$ so that $\shift^{-a}f-g$ belongs to the ideal $(\shift T)$.
Since the domain of $\shift^{-a}f$ lies in $\add T$, and $T$ is rigid,
we have $\shift^{-a}f=g$ and $f$ is a linear combination of paths in $\qcc$.

(2) Let $f$ be a linear combination of paths in $\qcc$. We claim that $f=0$ in $\rc_\cat$
if and only if $\sigma f = 0$ in $\rc_\dc$. Indeed:
\begin{eqnarray*}
 f = 0 \text{ in } \rc_\cat & \Leftrightarrow & \shift^{-a} f = 0 \text{ in } \rc_\cat \\
 & \Leftrightarrow & \cat(T,\shift^{-a} f) = 0 \text{ in } \module\End_\cat(T) \\
 & \Leftrightarrow & \dc(\sigma T, \sigma \shift^{-a}f) = 0 \text{ in } \module\End_\dc(U) \\
 & \Leftrightarrow & \dc(U,\shift^{-a}\sigma f) = 0 \text{ in } \module\End_\dc(U) \\
 & \Leftrightarrow & \shift^{-a}\sigma f = 0 \text{ in } \rc_\dc \\
 & \Leftrightarrow & \sigma f = 0 \text{ in } \rc_\dc.
\end{eqnarray*}
The second equivalence uses the fact that the domain of $\shift^{-a}f$ belongs to $\add T$,
the fourth equivalence follows from assumptions (b1) and (b2). The third equivalence follows from assumption (d)
as follows: This assumption implies that $\sigma$ induces an isomorphism from $\qcc^{\tau-\text{rig}}$
to $\qcd^{\tau-\text{rig}}$ which commutes with the inclusions into $\qcc$ and $\qcd$.

(3) By construction, the functor $\rc_\cat\fl\rc_\dc$ induced by $\sigma$ is dense.
 For all $i=1,\ldots, n$, let $U_i$ be $\sigma T_i$. Let $g$ be a morphism in $\rc_\dc$.
As above, we may assume that it is of the form $U_i \fl \shift^k U_j$,
for some $k\in\zb$ and some $i,j\in\{1,\ldots,n\}$.
There is some $f\in\cat(T_i,\shift^k T_j)$ whose image in $\module\End_\cat(T)$ is associated with
$\dc(U,g)$ in $\module\End_\dc(U)$. We thus have $\dc(U, g- \sigma f) = 0$, which implies $\sigma f = g$.
The functor induced by $\sigma$ is full.
\end{proof}

\begin{proposition}\label{prop: equivalences}
 For all $t\geq 1$, there are equivalences of additive categories:
\begin{enumerate}
 \item $\rc_{\ac_{n,t}} \simeq \rc_{\ac_{n,1}}$;
 \item $\rc_{\dc_{n,t}} \simeq \rc_{\ac_{n,1}}$;
 \item $\rc_{E_{7,2}} \simeq \rc_{\dc_{4, \tau \varphi}}$;
\end{enumerate}
\end{proposition}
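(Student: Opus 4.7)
The plan is to apply Lemma~\ref{lemma: comparing subcategories of rigids} in each of the three cases, taking the cluster tilting category $\cat$ to be the one that has cluster tilting objects (namely $\ac_{n,1}$ for items (1) and (2), and $\db{D}{4}/\tau\vph$ for item (3)) and the maximal-rigid category $\dc$ to be the other. In each case I would choose $T$ and $U$ whose endomorphism rings are manifestly isomorphic via Corollaries~\ref{corollary: max rigid A} and~\ref{endo:D}, and then transport the isomorphism of quivers of rigids supplied by Section~\ref{subsection: rigid} back to a functor.

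For items (1) and (2), let $T \in \ac_{n,1}$ be the cluster tilting object of Corollary~\ref{corollary: max rigid A} with $t=1$ (this object is cluster tilting by Lemma~\ref{lemma: rigid A}), and let $U$ be the maximal rigid object of Corollary~\ref{corollary: max rigid A} with $t>1$ in the first case, respectively the maximal rigid object of Corollary~\ref{endo:D} in the second case. By Propositions~\ref{proposition: max rigid A} and~\ref{proposition: max rigid D}, both $\qcc$ and $\qcd$ are canonically identified with $\qc_n$; define $\sigma$ to be the composition of these two identifications. Condition~(a) of Lemma~\ref{lemma: comparing subcategories of rigids} follows from the explicit parametrisations in Lemmas~\ref{lemma: rigid A} and~\ref{lemma: rigid D}: the $n$ indecomposable summands of $T$ (resp. $U$) determine $n$ shift-orbits of length $n+1$, which by Table 2 exhausts the $n(n+1)$ isomorphism classes of rigid indecomposables. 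Conditions~(b1) and~(b2) are built into the identification with $\qc_n$, since the shift acts on the vertices of $\qc_n$ by the same combinatorial rotation in both cases and the summands of $T$ and $U$ are sent to the same vertices; condition~(b3) then says precisely that both endomorphism algebras coincide with $\Lambda_n$, which is Corollaries~\ref{corollary: max rigid A} and~\ref{endo:D}. For condition~(d), one invokes the explicit description of the quiver of $\rc_n$ given after Corollary~\ref{endo:D}, namely that it is obtained from $\qc_n$ by deleting the vertices corresponding to $\shift T$; the same description applies on the $\dc$-side by shift-equivariance of $\sigma$. Lemma~\ref{lemma: comparing subcategories of rigids} then yields the equivalences (1) and (2).

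For item (3), the verification is much lighter. By Proposition~\ref{prop:e7/2} and the discussion immediately following it, the categories $\rc_{\db{E}{7}/\tau^2}$ and $\rc_{\db{D}{4}/\tau\vph}$ each contain exactly two indecomposable objects with no non-scalar morphisms between them, and the endomorphism ring of each indecomposable rigid is $\field[\al]/(\al^3)$. Picking any cluster tilting object $T$ in $\db{D}{4}/\tau\vph$ (which is automatically indecomposable) and the matching maximal rigid $U$ in $\db{E}{7}/\tau^2$, all four conditions of Lemma~\ref{lemma: comparing subcategories of rigids} are immediate: (a) holds because the second indecomposable rigid is a shift of the first; (b1)--(b3) because both endomorphism rings are $\field[\al]/(\al^3)$; and (d) is essentially vacuous.

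The main technical obstacle is condition~(c), namely that $\Lambda_n$ is generalised standard, so that every morphism in $\module\Lambda_n$ is a $\field$-linear combination of paths in its AR-quiver. Since $\Lambda_n$ is a representation-finite string (in fact Nakayama-with-one-loop) algebra, this should follow from standard results on such algebras rather than from anything visible in $\qc_n$ itself. Once (c) is granted, the remaining items are bookkeeping with the data already assembled in Section~\ref{section: rigid}, and the three equivalences drop out of Lemma~\ref{lemma: comparing subcategories of rigids} in parallel.
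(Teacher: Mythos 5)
Your proposal is correct and follows essentially the same route as the paper: the paper's proof of Proposition~\ref{prop: equivalences} consists precisely of invoking Lemma~\ref{lemma: comparing subcategories of rigids} and pointing to Sections~\ref{subsubsection: max rigid A}, \ref{subsubsection: max rigid D} and \ref{subsubsection: max rigid E} for the verification of hypotheses (a)--(d), which is exactly what you carry out (in more detail than the paper does). Your only hedge, condition (c), is settled by the paper's own appeal to the Bautista--Gabriel--Roiter--Salmeron standardness theorem for representation-finite algebras over an algebraically closed field of characteristic zero, so nothing is missing.
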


\begin{proof}
For each case, we need to check the assumptions of 
Lemma~\ref{lemma: comparing subcategories of rigids}. This is done in Sections \ref{subsubsection: max rigid A}, \ref{subsubsection: max rigid D} and
\ref{subsubsection: max rigid E}, respectively. 
\end{proof}

\section{2-endorigid algebras of finte type}\label{section: endoalg}

\subsection{A 2-endorigid algebra which is not 2-CY tilted}\label{subsection: not 2CY-tilted}
Consider the algebra $\Gamma =\field Q/I$, where $Q$ is the quiver
\bigskip
$$\xymatrix@C=0.3cm@R=0.1cm{
1  \ar@(ul,dl)_{\alpha} \ar[rrr]^{\beta} & && 2 \ar@(ur,dr)^{\gamma}}
$$
\bigskip
\bigskip
and the relations are $\beta \alpha - \gamma\beta, \alpha^2, \gamma^2 $.

The indecomposable projectives in $\module \Gamma$ are given by
$$P_1 = \begin{pmatrix}  & 1 & \\ 1& & 2 \\ &2& \end{pmatrix} \text{    and   } 
P_2 = \begin{pmatrix}   2 \\ 2 \end{pmatrix}, $$

while the indecomposable injectives are 
$$I_1 = \begin{pmatrix}  1  \\ 1 \end{pmatrix} \text{    and   } 
I_2 =  P_1. $$

We have a minimal injective coresolution of $\Gamma = P_1 \amalg P_2$ given by
$$
0 \to P_1 \amalg P_2 \to I_2 \amalg I_2 \to I_1 \to 0
$$
and hence $\id \Gamma = 1$, that is, $\Gamma$ is Gorenstein of dimension 1.
Then, see \cite{kr}, we have that $\Sub \Gamma$ is a Frobenius category with projective (=injective)
objects $\add \Gamma$, and 
$\underline{\Sub} \Gamma$ is a triangulated category, with
suspension functor isomorphic to $\Omega_{\Sub \Gamma}^{-1}$. 

We claim that $\Gamma$ is not a 2-CY-tilted algebra. To see this, consider 
the simple $S_2$ and the module $X=  \begin{pmatrix}   1 \\ 2 \end{pmatrix}$.
The exact sequence
$$0 \to S_2 \to P_2 \to S_2 \to 0$$ 
in $\Sub \Gamma$, shows that $\Omega^{-1} (S_2) \simeq S_2 
\simeq \Omega^{1}(S_2)$.
Hence also $\Omega^{-3} (S_2) \simeq S_2$.
We then have that $\underline{\Hom}(S_2, X) \neq 0$, while clearly $\underline{\Hom}(X,S_2) = 0$.
Therefore $\underline{\Sub} \Gamma$ is not 3-Calabi-Yau, and this implies that $\Gamma$ is
not a 2-CY-tilted algebra, by \cite{kr}.
The same argument shows that $\Gamma$ is not $d$-CY-tilted for $d\geq 2$.

\subsection{Standard 2-Calabi--Yau categories}

Recall that our base field $\field$ is assumed to be algebraically closed and of characteristic 0.
In that setup, it is known from \cite{bgrs} that all finite-dimensional algebras of finite representation type
are standard: Their module categories are the path categories on their Auslander--Reiten quivers
modulo all mesh relations. In this section, we adress the following related question: Let $\cat$
be a triangulated category of finite type. If $\cat$ is 2-CY with cluster tilting objects,
is it standard? We were not able to answer this question so far.
However, we prove here that $\cat$ is generalised standard 
\cite{sko} in the following sense. 

\begin{definition} \rm
A $\field$-linear, Krull--Schmidt, Hom-finite, triangulated category with a Serre functor is called \emph{generalised standard}
if all of its morphisms are given by linear combinations of paths in its Auslander--Reiten quiver.
\end{definition}

\begin{proposition}\label{proposition: generalised standard}
 Let $\cat$ be a $\field$-linear, Krull--Schmidt, 2-Calabi--Yau, triangulated category.
Assume that $T\in\cat$ is a cluster tilting object whose endomorphism algebra
is generalised standard. Then $\cat$ is generalised standard.
\end{proposition}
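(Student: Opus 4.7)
The plan is to transfer the generalised standard property from $\module\End_\cat(T)^{\op}$ to $\cat$ via the Keller--Reiten / Buan--Marsh--Reiten equivalence
\[F = \cat(T,-) : \cat/\add(\shift T) \stackrel{\sim}{\longrightarrow} \module\End_\cat(T)^{\op}.\]
The crucial input is the compatibility of $F$ with Auslander--Reiten structure: since $\cat$ is $2$-Calabi--Yau, the AR-translate in $\cat$ coincides with $\shift$, and $F$ carries AR-triangles $\shift Z \to E_Z \to Z \to \shift^2 Z$ whose terms lie outside $\add(\shift T)$ to almost split sequences in the target module category (and to minimal projective presentations when $Z\in\add T$). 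Combined with the locality of $\End_\cat(X)$ for $X$ indecomposable and the rigidity $\cat(T,\shift T) = 0$, this yields the following key lifting property: for any pair of indecomposables $X,Y \in \cat$ with $X,Y \notin \add(\shift T)$, every irreducible morphism $\varphi : FX \to FY$ admits a lift $\tilde\varphi \in \cat(X,Y)$ that is itself irreducible, since a nontrivial factorization of $\tilde\varphi$ in $\cat$ would descend to a nontrivial factorization of $\varphi$, and a split mono or epi modulo $\add(\shift T)$ between such indecomposables lifts to a genuine split mono or epi in $\cat$ by a local-ring argument.

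Given a morphism $f : X \to Y$ between indecomposables of $\cat$, I will first apply $F$ and use the generalised standard hypothesis to decompose $F(f)$ as a linear combination of paths in the AR-quiver of $\module\End_\cat(T)^{\op}$. Lifting each irreducible constituent as above produces $\tilde f \in \cat(X,Y)$ expressible as a linear combination of paths in the AR-quiver of $\cat$ with $F(\tilde f) = F(f)$. Thus $f - \tilde f$ lies in the kernel of $F$ and factors through $\add(\shift T)$, reducing the question to expressing morphisms of the form $h\circ g$, with $g : X \to \shift T_i$ and $h : \shift T_j \to Y$, as linear combinations of paths in the AR-quiver of $\cat$.

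To handle this remaining case I will use the symmetric equivalence
\[F' = \cat(\shift^{-1}T,-) : \cat/\add T \stackrel{\sim}{\longrightarrow} \module\End_\cat(\shift^{-1}T)^{\op}\]
associated with the cluster tilting object $\shift^{-1}T$. Since the shift autoequivalence induces an isomorphism $\End_\cat(\shift^{-1}T) \cong \End_\cat(T)$, the new target is again generalised standard, so applying the same lifting procedure to $g$ through $F'$ expresses $g$ as a linear combination of paths modulo a residue $X \to \shift T_i$ factoring through $\add T$; rigidity $\cat(T,\shift T) = 0$ forces such a residue to vanish, so $g$ is itself a linear combination of paths in the AR-quiver of $\cat$. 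The morphism $h$ is handled analogously when $Y \notin \add T$. The hard part will be the remaining case $Y\in\add T$, where a morphism $h : \shift T_j \to T_k$ lies in the kernel of both $F$ and $F'$: I plan to address this by iterating the same argument with the cluster tilting objects $\shift^n T$ (each giving a generalised standard target by the same shift isomorphism) and by factoring $h$ through the right almost split map $E_{T_k} \to T_k$ provided by the AR-triangle ending at $T_k$, so that the radical degree of $h$ decreases until termination is forced by $\operatorname{rad}^\infty = 0$ in one of the target module categories.
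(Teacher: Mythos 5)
Your overall architecture coincides with the paper's: reduce to $\module\End_\cat(T)^{\op}$ via $\cat(T,-)$, lift linear combinations of paths using the compatibility of the Auslander--Reiten structures of $\cat$ and of the module category (this is \cite[Proposition 3.2]{bmr}, which the paper invokes at the same point), and then deal with the residual ideal $(\shift T)$ by factoring a morphism $g\in(\shift T)$ as $X\stackrel{b}{\fl}\shift U\stackrel{a}{\fl}Y$ with $U\in\add T$ and killing the error terms by rigidity. Your treatment of the first factor $b$ is correct and is just a repackaging of the paper's argument: applying $\cat(\shift^{-1}T,-)$ to $b$ is the same as applying $\cat(T,-)$ to $\shift b$, and the error term's second leg $V\fl\shift U$ (with $V\in\add T$) lies in $\cat(V,\shift U)=\Ext^1_\cat(V,U)=0$.

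The gap is in the second factor $a\colon\shift U\fl Y$. Handling it ``analogously'' via $F'=\cat(\shift^{-1}T,-)$ does not work: the error term there factors as $\shift U\fl W\fl Y$ with $W\in\add T$, and neither leg is forced to vanish --- by the $2$-Calabi--Yau property $\cat(\shift U,W)\simeq D\cat(W,\shift^{3}U)$, which has no reason to be zero. This is also the source of your spurious ``hard case'' $Y\in\add T$, whose proposed resolution (iterating over the cluster tilting objects $\shift^{n}T$ and descending the radical degree) is not shown to terminate: vanishing of the infinite radical of $\cat$ is essentially what one is trying to prove, and the generalised standardness of $\End_\cat(T)$ is not used in a way that forces the descent to stop. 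The correct and uniform move, which is what the paper does, is to apply the first-step decomposition to $\susm a\colon U\fl\susm Y$, whose \emph{domain} lies in $\add T$: its error term factors as $U\fl\shift V\fl\susm Y$ with $V\in\add T$, and the first leg lies in $\cat(U,\shift V)=\Ext^1_\cat(U,V)=0$. With this replacement no case distinction on $Y$ is needed, no iteration is required, and your argument closes.
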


\begin{proof}
 Let $\Gamma$ be the Auslander--Reiten quiver of $\cat$, and
$\overline{\Gamma}$ be the one of $\Endt$. By~\cite[Proposition 3.2]{bmr}
the AR-sequences in $\modt$ are induced by the AR-triangles in $\cat$. 
It follows that $\overline{\Gamma}$ is naturally a full subquiver of
$\Gamma$
and that we can pick a basis $(e_\al)_{\al\in\Gamma_1}$ of irreducible morphisms in $\cat$
adapted to $\Gamma$ (i.e. satisfying the mesh relations)
such that $(\cat(T,e_\al))_{\al\in\overline{\Gamma}_1}$ is a basis
of irreducible morphisms in $\modt$ adapted to $\overline{\Gamma}$. In what follows,
we will use the following notation: if $p=\sum_i \ld_i \al^i_{k_i}\cdots\al^i_1$ is a
linear combination of paths in $\Gamma$, we write $e_p$ for the morphism
$\sum_i\ld_i e_{\al^i_{k_i}}\circ\cdots\circ e_{\al^i_1}$.
We note that the statement of the lemma is an immediate consequence of the two claims below.

\emph{Claim} 1: Any morphism $f$ in $\cat$ is of the form $f = e_p+g$,
where $p$ is a linear combination of paths in $\Gamma$ and
$g$ belongs to the ideal $(\shift T)$.

\emph{Proof of Claim} 1: Since $\Endt$ is generalised standard, $\cat(T,f)$ is of the form $\cat(T,e_p)$
where $p$ is a linear combination of paths in $\overline{\Gamma}$, viewed as
a subquiver of $\Gamma$. We thus have $f = e_p + g$ for some $g\in(\shift T)$.

\emph{Claim} 2: Any morphism $g\in(\shift T)$ is of the form $e_p$, for some
linear combination $p$ of paths in $\Gamma$.

\emph{Proof of Claim} 2: Let $X\stackrel{g}{\gfl}Y$ belong to $(\shift T)$. Then there are some $U\in\add T$,
$\shift U\stackrel{a}{\gfl}Y$ and $X\stackrel{b}{\gfl}\shift U$ such that $g = ab$.
Applying Claim 1 to $\shift b$ gives a linear combination $q$ of paths in $\Gamma$
and a morphism $h$ in $(\shift T)$ such that $\shift b = e_q + h$. Since $T$ is rigid
and $\shift b$ has codomain in $\add\shift^2T$, then $h$ is zero. A similar argument
shows that $\susm a$ is of the form $e_r$. The claim follows.
\end{proof}

\begin{corollary}\label{corollary: generalised standard}
Let $\cat$ be a $\field$-linear, Krull--Schmidt, 2-Calabi--Yau, triangulated category.
Assume that $T\in\cat$ is a cluster tilting object whose endomorphism algebra is of finite representation type.
Then $\cat$ is generalised standard.
\end{corollary}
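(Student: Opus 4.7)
The plan is to reduce Corollary \ref{corollary: generalised standard} directly to Proposition \ref{proposition: generalised standard}. The only hypothesis in the proposition beyond what the corollary gives is that $\End_\cat(T)$ is generalised standard, so the entire task is to verify that hypothesis from finite representation type alone.

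First, I would recall the result of Bongartz--Gabriel--Riedtmann--Skowro\'nski~\cite{bgrs} (already cited in the paragraph preceding the definition of generalised standard in this paper): over an algebraically closed field of characteristic $0$, every finite-dimensional algebra of finite representation type is standard, i.e.\ its module category is the mesh category of its Auslander--Reiten quiver. In particular, every morphism in $\module\End_\cat(T)$ is a linear combination of paths in the Auslander--Reiten quiver of $\End_\cat(T)$, modulo the mesh relations. This is exactly the statement that $\End_\cat(T)$ is generalised standard in the sense of the definition preceding Proposition~\ref{proposition: generalised standard}.

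Once this is in place, I would simply apply Proposition~\ref{proposition: generalised standard} to $\cat$ and $T$: the conclusion is that $\cat$ itself is generalised standard, which is what the corollary asserts. Since the hypotheses on $\cat$ (being $\field$-linear, Krull--Schmidt, Hom-finite, $2$-Calabi--Yau and triangulated, with a cluster tilting object $T$) are identical to those of the proposition, no additional verification is required.

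There is no real obstacle here: the content of the corollary is just the specialisation of Proposition~\ref{proposition: generalised standard} to the situation where one can invoke~\cite{bgrs} to supply the generalised standardness of the endomorphism algebra. The only minor point worth stating explicitly in the written proof is that the characteristic-zero hypothesis on $\field$ (in force throughout the paper) is what licenses the use of~\cite{bgrs}.
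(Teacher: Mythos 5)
Your proof is correct and is exactly the argument the paper intends: the corollary is stated without a separate proof precisely because, as noted in the paragraph introducing the subsection, the result of \cite{bgrs} (valid since $\field$ is algebraically closed of characteristic zero) shows that an algebra of finite representation type is standard, hence generalised standard, so Proposition~\ref{proposition: generalised standard} applies directly. No further comment is needed.
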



\subsection{The standard 2-endorigid algebras of finite representation type}

We call a finite dimensional $\field$-algebra \emph{standard 2-endorigid} if it is isomorphic to the endomorphism algebra
of a maximal rigid object in a standard, ($\field$-linear, Krull--Schmidt) $2$-Calabi--Yau,
triangulated category. 

The standard 2-CY-tilted algebras of finite representation type were classified by
Bertani--Oppermann in \cite{bo}, where a quiver with potential
is given for each isomorphism class. Ladkani noticed, see \cite{l}, that a 2-CY category with cluster tilting objects was missing
in the list given in \cite[Appendix]{bikr}. For a comprehensive classification of all
standard 2-CY-tilted algebras of finite representation type one thus has to take the algebra appearing in \cite{l} into account.

\begin{theorem}
The connected, standard 2-endorigid algebras of finite representation type are exactly the standard 2-CY-tilted algebras
of finite representation type listed in~\cite{bo} (see also~\cite{l}) and the non-Jacobian
2-endorigid algebra of Section~\ref{subsection: not 2CY-tilted}.
\end{theorem}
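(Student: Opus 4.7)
The plan is to combine the classification of standard 2-Calabi--Yau triangulated categories of finite type (Propositions \ref{proposition: list CTO} and \ref{proposition: list max rigid}) with the comparison of rigid subcategories given by Proposition \ref{prop: equivalences}. Suppose $\Gamma$ is a connected, standard 2-endorigid algebra of finite representation type, so $\Gamma \simeq \End_\cat(M)$ for some basic maximal rigid $M$ in a standard, $\Hom$-finite, Krull--Schmidt, triangulated 2-CY $\field$-category $\cat$ with finitely many indecomposables. Passing to a connected component, we may assume $\cat$ is connected; then $\cat$ is either a cluster category of Dynkin type, or one of the orbit categories listed in the two classification propositions.

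First, I would handle the case where $\cat$ admits a cluster tilting object. By \cite{zz}, in such a $\cat$ every maximal rigid object is cluster tilting, so $M$ is cluster tilting and $\Gamma$ is 2-CY-tilted. Since $\cat$ is standard and $\Gamma$ is of finite representation type, $\Gamma$ appears in the Bertani-{\O}kland--Oppermann classification \cite{bo}, augmented by the algebra attached to $\db{E}{8}/\tau^4$ added in \cite{l}. Conversely, every algebra on that list is realised as $\End(T)$ for a cluster tilting, hence maximal rigid, object, so it is in particular 2-endorigid.

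Next, if $\cat$ has no cluster tilting object but carries non-zero maximal rigid objects, then by Proposition \ref{proposition: list max rigid} we have $\cat \in \{\ac_{n,t}\ (t>1),\ \dc_{n,t},\ \db{E}{7}/\tau^2,\ \db{E}{7}/\tau^5\}$. For each of the first three families, Proposition \ref{prop: equivalences} yields an additive equivalence $\rc_\cat \simeq \rc_{\cat'}$, where $\cat'$ is one of $\ac_{n,1}$ or $\db{D}{4}/\tau\vph$, each of which has cluster tilting objects. Under such an equivalence, basic maximal rigid objects go to basic maximal rigid objects and their endomorphism algebras are preserved; hence $\Gamma \simeq \End_{\cat'}(M')$ with $M'$ maximal rigid, and then cluster tilting by \cite{zz}, so $\Gamma$ already appears in the list of \cite{bo, l}. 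The remaining case $\cat = \db{E}{7}/\tau^5$ is the genuine exception: Proposition \ref{e7-alg} identifies $\Gamma$ with the algebra of Section \ref{subsection: not 2CY-tilted}, where an explicit computation inside the Frobenius category $\Sub\Gamma$ shows that $\underline{\Sub}\Gamma$ fails to be 3-CY, so $\Gamma$ is not 2-CY-tilted.

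The essential work is packaged into Proposition \ref{prop: equivalences}, whose proof reduces via Lemma \ref{lemma: comparing subcategories of rigids} to the Auslander--Reiten quiver and Hom-hammock computations of Section \ref{section: rigid}; so the hard part has already been done. The only delicate point remaining for the theorem itself is to check that the equivalences $\rc_\cat \simeq \rc_{\cat'}$ do identify basic maximal rigid objects on both sides. This is immediate because the property of being a basic maximal rigid object depends only on the indecomposables of $\rc_\cat$ and the vanishing of $\Ext^1$ between their additive closures, data which are intrinsic to $\rc_\cat$ as an additive category and therefore transported along any additive equivalence.
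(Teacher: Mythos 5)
Your proposal is correct and follows essentially the same route as the paper: the paper's own proof is a two-line appeal to the classification of Tables 1 and 2 together with Proposition~\ref{prop: equivalences}, and your write-up is a faithful (and more detailed) expansion of exactly that argument, including the reduction of the non-cluster-tilting families to $\ac_{n,1}$ and $\db{D}{4}/\tau\vph$ via the equivalences of rigid subcategories, and the isolation of $\db{E}{7}/\tau^5$ as the single exception.

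One justification in your last paragraph is off, though the conclusion it supports is fine. You claim that maximal rigidity is ``intrinsic to $\rc_\cat$ as an additive category and therefore transported along any additive equivalence.'' That is not true: rigidity is the vanishing of $\Hom_\cat(M,\shift M)$, and although $\shift M$ again lies in $\rc_\cat$ (rigid objects are closed under shift), the shift functor is extra structure not determined by the additive category alone, so an arbitrary additive equivalence need not preserve $\Ext^1$-vanishing. What saves the argument is that the equivalences of Proposition~\ref{prop: equivalences} are induced by the quiver isomorphisms $\sigma$ of Lemma~\ref{lemma: comparing subcategories of rigids}, which by assumption (b1) commute with the shift on objects and morphisms; hence they do identify $\Hom(M,\shift M)$ on the two sides, and therefore carry (basic) maximal rigid objects to (basic) maximal rigid objects with isomorphic endomorphism algebras. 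With that substitution your proof is complete.
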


\begin{proof}
The theorem follows from the classification \cite{a,bikr} of all standard 2-Calabi--Yau triangulated categories
with maximal rigid objects (see Table 1 and Table 2) and from the equivalences of categories in Proposition~\ref{prop: equivalences}.
\end{proof}

\begin{remark}
We note that the conclusion of Corollary~\ref{corollary: generalised standard} being weaker than one would like,
we do not know if the list discussed above contains all 2-endorigid algebras of finite representation type.
\end{remark}

\end{document}